\newcommand{\R}{{\mathbb R}}
\newcommand{\Z}{{\mathbb Z}}
\newcommand{\N}{{\mathbb N}}
\newcommand{\Q}{{\mathbb Q}}
\newcommand{\D}{{\mathcal D}}
\newcommand{\B}{{\mathcal B}}
\newcommand{\esslim}{\mathop{\rm ess\,lim}}
\newcommand{\sign}{\mathop{\rm sign}}
\newcommand{\supp}{\mathop{\rm supp}}
\renewcommand{\div}{{\rm div}}
\renewcommand{\>}{\rangle}
\def\Xint#1{\mathchoice
{\XXint\displaystyle\textstyle{#1}}%
{\XXint\textstyle\scriptstyle{#1}}%
{\XXint\scriptstyle\scriptscriptstyle{#1}}%
{\XXint\scriptscriptstyle\scriptscriptstyle{#1}}%
\!\int}
\def\XXint#1#2#3{{\setbox0=\hbox{$#1{#2#3}{\int}$ }
\vcenter{\hbox{$#2#3$ }}\kern-.57\wd0}}
\def\dashint{\Xint-}
\numberwithin{equation}{section}
\theoremstyle{plain}
\newtheorem{theorem}{Theorem}[section]
\newtheorem{lemma}{Lemma}[section]
\newtheorem{corollary}{Corollary}[section]
\theoremstyle{definition}
\newtheorem{definition}{Definition}[section]
\newtheorem{remark}{Remark}[section]
\date\empty
\title{On the Cauchy problem for scalar conservation laws on the Bohr compactification of $\R^n$}
\author{E.Yu.~Panov\footnote{Novgorod State University, e-mail: Eugeny.Panov@novsu.ru}}
\begin{document}

\maketitle
\begin{abstract} We study the Cauchy problem for a multidimensional scalar conservation law on the Bohr compactification of $\R^n$. The existence and uniqueness of entropy solutions are established in the general case of merely continuous flux vector. We propose also the necessary and sufficient condition for the decay of entropy solutions as time $t\to+\infty$.
\end{abstract}

\section{Introduction.}

Let $AP(\R^n)$ be the algebra of Bohr almost periodic functions. These functions can be described as uniform limits of trigonometric polynomials on $\R^n$ (~i.e., finite sums $\sum a_\lambda e^{2\pi i\lambda\cdot x}$, with ${i^2=-1}$, $\lambda\in\R^n$~). Denote by $C_R$ the cube
$$\{ \ x=(x_1,\ldots,x_n)\in\R^n \ | \ |x|_\infty=\max_{j=1,\ldots,n}|x_j|\le R/2 \ \}, \quad R>0$$ and let
$$N_1(u)=\limsup_{R\to +\infty} R^{-n}\int_{C_R} |u(x)|dx
$$
be the mean $L^1$-norm of a function $u(x)\in L^1_{loc}(\R^n)$.
Recall (~see \cite{Bes,Lev}~) that Besicovitch space $\B^1(\R^n)$ is the closure of trigonometric polynomials in the quotient space $B^1(\R^n)/B^1_0(\R^n)$, where
$$
B^1(\R^n)=\{ u\in L^1_{loc}(\R^n) \ | \ N_1(u)<+\infty \}, \ B^1_0(\R^n)=\{ u\in L^1_{loc}(\R^n) \ | \ N_1(u)=0 \}.
$$
The space $\B^1(\R^n)$ is equipped with the norm $\|u\|_1=N_1(u)$ (we identify classes in the quotient space and their representatives).
The space $\B^1(\R^n)$ is a Banach space, it is isomorphic to the completeness of the space $AP(\R^n)$ of Bohr almost periodic functions with respect to the norm $N_1$.

It is known \cite{Bes} that for each $u\in \B^1(\R^n)$ there exist the mean value
$$\dashint_{\R^n} u(x)dx\doteq\lim\limits_{R\to+\infty}R^{-n}\int_{C_R} u(x)dx$$ and, more generally, the Bohr-Fourier coefficients
$$
a_\lambda=\dashint_{\R^n} u(x)e^{-2\pi i\lambda\cdot x}dx, \quad\lambda\in\R^n.
$$
The set
$$ Sp(u)=\{ \ \lambda\in\R^n \ | \ a_\lambda\not=0 \ \} $$ is called the spectrum of an almost periodic function $u$.
It is known \cite{Bes} that the spectrum $Sp(u)$ is at most countable. Denote by $M(u)$ the smallest additive subgroup of $\R^n$ containing $Sp(u)$ (notice that $M(u)$ is always countable whenever it is different from the zero subgroup).

Now we consider the Cauchy problem for the conservation law
\begin{equation}\label{1}
u_t+\div_x\varphi(u)=0
\end{equation}
with initial data
\begin{equation}\label{ini}
u(0,x)=u_0(x)\in L^\infty(\R^n).
\end{equation}
The flux vector $\varphi(u)$ is supposed to be only continuous: $$\varphi(u)=(\varphi_1(u),\ldots,\varphi_n(u))\in C(\R,\R^n).$$
Recall the notion of entropy solution of (\ref{1}), (\ref{ini}) in the sense of S.N.~Kruzhkov \cite{Kr}.

\begin{definition}\label{def1}
A bounded measurable function $u=u(t,x)\in L^\infty(\Pi)$ is called an entropy solution (e.s. for
short) of (\ref{1}), (\ref{ini}) if for all $k\in\R$
\begin{equation}\label{2}
|u-k|_t+\div_x[\sign(u-k)(\varphi(u)-\varphi(k))]\le 0
\end{equation}
in the sense of distributions on $\Pi$ (in $\D'(\Pi)$);
$$\esslim_{t\to 0} u(t,\cdot)=u_0 \ \mbox{ in } L^1_{loc}(\R^n).$$
\end{definition}

Condition (\ref{2}) means that for all non-negative test functions $f=f(t,x)\in C_0^1(\Pi)$
$$
\int_\Pi [|u-k|f_t+\sign(u-k)(\varphi(u)-\varphi(k))\cdot\nabla_xf]dtdx\ge 0
$$
(here $\cdot$ denotes the inner product in $\R^n$).

It is known that e.s. always exists (see \cite{KrPa1,PaMV,PaIzv}~) but, in the case under consideration when the flux functions are merely continuous, this e.s. may be nonunique (see examples in \cite{KrPa1,KrPa2}).

In recent preprint \cite{prep} problem (\ref{1}), (\ref{ini}) was studied in the case when $u_0\in\B^1(\R^n)\cap L^\infty(\R^n)$. The following results were established.

\begin{theorem}\label{th1}
Let $u(t,x)$ be an e.s. of problem (\ref{1}), (\ref{ini}).
Then, after possible correction on a set of null measure, ${u(t,\cdot)\in C([0,+\infty),\B^1(\R^n))\cap L^\infty(\Pi)}$ and for all $t>0$  $M(u(t,\cdot))\subset M(u_0)$.
\end{theorem}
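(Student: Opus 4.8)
The plan is to reduce both conclusions to a single translation–contraction estimate in the mean norm. Writing $\tau_h w(x)=w(x+h)$ for $h\in\R^n$, the function $\tau_h u$ is again an e.s., with initial data $\tau_h u_0$, since equation (\ref{1}) and the entropy inequalities (\ref{2}) are invariant under spatial shifts. The central claim I would isolate is
\[
N_1(\tau_h u(t,\cdot)-u(t,\cdot))\le N_1(\tau_h u_0-u_0),\qquad h\in\R^n,\ t>0.
\]
Granting this, everything else is essentially formal. The point is that when $\varphi$ is Lipschitz the estimate is immediate: the Kruzhkov (Kato) inequality
\[
|u-\tau_h u|_t+\div_x[\sign(u-\tau_h u)(\varphi(u)-\varphi(\tau_h u))]\le0
\]
holds in $\D'(\Pi)$, and testing it against $R^{-n}$ times a cutoff of $C_R$ produces a boundary flux of order $R^{-1}\sup_{|\xi|\le\|u_0\|_\infty}|\varphi(\xi)|$, which vanishes as $R\to+\infty$; taking $\limsup_R$ then yields the displayed inequality.

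Next I would derive the spectrum inclusion. Identify $\B^1(\R^n)$ with $L^1(G)$, $G=b\R^n$ the Bohr compactification, on which $\R^n$ acts by translations with dense image and whose characters are $e_\lambda(x)=e^{2\pi i\lambda\cdot x}$, $\lambda\in\R^n$. For $u_0\in\B^1$ the map $h\mapsto\tau_h u_0$ extends to a $\B^1$-continuous map on $G$ (it does so for trigonometric polynomials, hence for all of $\B^1$ by density), and its stabilizer is exactly the annihilator $M(u_0)^\perp=\{\eta\in G: e_\lambda(\eta)=1\ \forall\lambda\in M(u_0)\}$. Given $\eta\in M(u_0)^\perp$, choose $h_k\in\R^n$ with $h_k\to\eta$ in $G$; then $N_1(\tau_{h_k}u_0-u_0)\to0$, so the contraction estimate forces $N_1(\tau_{h_k}u(t,\cdot)-u(t,\cdot))\to0$, i.e. $\tau_\eta u(t,\cdot)=u(t,\cdot)$. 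Thus $u(t,\cdot)$ is $M(u_0)^\perp$-invariant, which by Pontryagin duality is equivalent to $Sp(u(t,\cdot))\subset M(u_0)$, whence $M(u(t,\cdot))\subset M(u_0)$.

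For the regularity statement, the bound $\|u(t,\cdot)\|_\infty\le\|u_0\|_\infty$ is the standard maximum principle (take $k=\pm\|u_0\|_\infty$ in (\ref{2})). The same contraction estimate, now with $h$ running over the $\varepsilon$-almost periods of $u_0$ (relatively dense since $u_0\in\B^1$), shows that the family $\{u(t,\cdot)\}_{t\ge0}$ is equi-Besicovitch-almost-periodic and uniformly bounded in mean norm, hence relatively compact in $\B^1$; in particular $u(t,\cdot)\in\B^1$ for each $t$. Since an e.s. is also a weak solution, each coefficient $a_\lambda(t)=\dashint_{\R^n} u(t,x)e^{-2\pi i\lambda\cdot x}dx$ is Lipschitz in $t$, from $\dot a_\lambda=-2\pi i\,\lambda\cdot\dashint_{\R^n}\varphi(u)e^{-2\pi i\lambda\cdot x}dx$ with right-hand side bounded by $2\pi|\lambda|\sup_{|\xi|\le\|u_0\|_\infty}|\varphi(\xi)|$. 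Precompactness upgrades pointwise coefficient continuity to $\B^1$-continuity, giving $u\in C([0,+\infty),\B^1(\R^n))$ after modification on a null set of times, and the value at $t=0$ is pinned to $u_0$ because the initial condition holds in $L^1_{loc}$ while the range is precompact in $\B^1$.

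The whole difficulty is concentrated in the contraction estimate for merely continuous $\varphi$: then the Kato inequality between two e.s. is not available — indeed its validity for all pairs would force uniqueness, contrary to \cite{KrPa1,KrPa2} — so the estimate cannot be obtained by doubling of variables as in the Lipschitz case. I would attack it by approximating $\varphi$ with Lipschitz fluxes $\varphi_m\to\varphi$ locally uniformly; the genuine obstacle is that the given $u$ need not be a limit of $\varphi_m$-solutions, so the contraction must be transferred to an \emph{arbitrary} e.s. This is where I expect to need the kinetic (renormalized) formulation together with the strong precompactness of entropy solutions, providing a translation comparison that survives the passage to the non-Lipschitz limit. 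Establishing the contraction estimate in this generality is the crux; the two stated conclusions then follow from it as above.
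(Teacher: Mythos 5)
There is a genuine gap, and it sits exactly where you put it: the translation--contraction estimate $N_1(\tau_h u(t,\cdot)-u(t,\cdot))\le N_1(\tau_h u_0-u_0)$ is asserted, used for everything, and left unproven. (Note that the present paper does not prove Theorem~\ref{th1} at all --- it imports it from the preprint \cite{prep}, whose proof is built on precisely this mean-norm contraction, so your architecture --- contraction $\Rightarrow$ annihilator/duality argument for the spectrum, contraction $+$ coefficient estimates $\Rightarrow$ $\B^1$-regularity and time continuity --- is essentially the right one.) The problem is that your reason for declaring the contraction out of reach of doubling is wrong. Kruzhkov's doubling of variables yields the Kato inequality between \emph{any} two e.s.\ for merely continuous $\varphi$: the passage to the limit in the doubled inequality uses only the modulus of continuity $\omega(r)=\max_{|a-b|\le r,\,|a|,|b|\le M}|\varphi(a)-\varphi(b)|$ together with the $L^1_{loc}$-continuity of translations (Lebesgue points) of one of the solutions --- this is exactly the computation the present paper carries out on $\B_n$ in \eqref{ad11}--\eqref{ad14}, where no Lipschitz hypothesis appears. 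Lipschitz continuity enters the classical theory only through finite speed of propagation, and your averaged test function $R^{-n}g(x/R)$ makes that irrelevant: the boundary flux is $O\bigl(R^{-1}\sup_{|\xi|\le M}|\varphi(\xi)|\bigr)$, which is finite and vanishes as $R\to+\infty$ for any continuous $\varphi$ and bounded $u$. So your own ``Lipschitz-case'' computation, applied verbatim to $u$ and $\tau_h u$, already proves the contraction for continuous flux; the kinetic/renormalized detour is unnecessary.

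Your meta-argument --- ``Kato for all pairs would force uniqueness, contrary to \cite{KrPa1,KrPa2}'' --- is the fallacy that led you astray. Testing the Kato inequality with $R^{-n}g(x/R)$ yields only $N_1(u_1(t,\cdot)-u_2(t,\cdot))\le N_1(u_{01}-u_{02})$, i.e., uniqueness \emph{in $\B^1$}, up to functions of zero mean norm; this does not contradict the pointwise nonuniqueness examples of \cite{KrPa1,KrPa2}, where the distinct solutions coincide as elements of $\B^1$. Indeed, this $\B^1$-level uniqueness is exactly what the present paper's Theorem~\ref{adth1} establishes (a.e.\ uniqueness on $\B_n$ is $\B^1$-equivalence on $\R^n$), and it would be refuted by your reading of \cite{KrPa1,KrPa2} if that reading were correct. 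Two smaller points need repair but are fixable: precompactness of $\{u(t,\cdot)\}$ in $\B^1$ does not follow from relative density of $\varepsilon$-almost periods alone --- the clean route is to note that $h\mapsto N_1(\tau_h u_0-u_0)$ extends continuously to $\B_n$ and vanishes at $0$, so the contraction gives equicontinuity of translations over the \emph{compact group}, and a Kolmogorov--Riesz criterion in $L^1(\B_n,m)$ applies; and the identification $u(0+)=u_0$ cannot be read off from ``$L^1_{loc}$ convergence plus $\B^1$-precompactness,'' since the $L^1_{loc}$ and $\B^1$ topologies are incomparable --- you must pass through the Bohr--Fourier coefficients (equi-Lipschitz in $t$ by your ODE bound, convergent to those of $u_0$ as $t\to0$ via the initial condition and Lemma~\ref{lemad2}).
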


\begin{theorem}[decay property]\label{th2}
Assume that
\begin{eqnarray}\label{ND}
\forall\xi\in M(u_0), \xi\not=0 \ \mbox{ the functions } u\to\xi\cdot\varphi(u) \nonumber\\
\mbox{ are not affine on non-empty intervals }
\end{eqnarray}
(the linear non-degeneracy condition). Let $u(t,x)$ be an e.s. of problem (\ref{1}), (\ref{ini}). Then
\begin{equation}\label{dec}
\lim_{t\to +\infty} \dashint_{\R^n}|u(t,x)-C|dx=0, \quad \mbox{ where } \quad C=\dashint_{\R^n} u_0(x)dx.
\end{equation}
Moreover, condition (\ref{ND}) is exact: if it fails, then there exists an initial function $u_0\in\B^1(\R^n)\cap L^\infty(\R^n)$ such that $Sp(u_0)\subset M(u_0)$ and the e.s. of (\ref{1}), (\ref{ini}) does not satisfy the decay property (\ref{dec}).
\end{theorem}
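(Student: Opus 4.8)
The plan is to prove the decay (\ref{dec}) under (\ref{ND}) by a long-time compactness argument driven by two monotone quantities, and then to establish sharpness by an explicit travelling-wave construction. For the forward direction I would first isolate the two Lyapunov structures. By Theorem~\ref{th1} we have $u(t,\cdot)\in C([0,+\infty),\B^1(\R^n))$ and $\varphi(u(t,\cdot))\in\B^1(\R^n)$, so the $\lambda=0$ Bohr--Fourier coefficient $\dashint_{\R^n}u(t,x)dx$ has vanishing time derivative (the Bohr mean of a divergence is zero) and therefore equals $C$ for all $t$. Next, for each convex entropy $\eta$ with flux $q$ the Kruzhkov inequality (\ref{2}) reads $\partial_t\eta(u)+\div_x q(u)=-\mu$ with a nonnegative measure $\mu$; taking the Bohr mean in $x$ annihilates the divergence and shows that $t\mapsto\dashint_{\R^n}\eta(u(t,x))dx$ is nonincreasing. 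Being bounded, it converges as $t\to+\infty$, hence the dissipation of $u$ over any window $[s,s+T]$ tends to $0$ as $s\to+\infty$.

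I would then analyse the time-translates $u_s(t,x)=u(t+s,x)$, which are e.s.\ on $\R\times\R^n$ with a common $L^\infty$ bound and, by Theorem~\ref{th1}, with $M(u_s(t,\cdot))\subset M(u_0)$. The crucial step is strong precompactness of $\{u_s\}$ in $C([0,T],\B^1(\R^n))$. Since all frequencies lie in the countable group $M(u_0)$, the problem transfers to a scalar conservation law on the dual compact group, where I would obtain compactness from (\ref{ND}) through an H-measure (localization) argument: condition (\ref{ND}), asserting that $u\mapsto\xi\cdot\varphi(u)$ is non-affine for every nonzero $\xi\in M(u_0)$, is exactly what makes the associated H-measure vanish and upgrades weak convergence to strong. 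Passing to a subsequence $s_m\to+\infty$ gives $u_{s_m}\to w$ with $w$ a global e.s.\ on $\R\times\R^n$. Because $\dashint_{\R^n}\eta(u(t))dx$ converges, $w$ has constant-in-$t$ mean entropy for every convex $\eta$, while the vanishing tail dissipation forces, by weak lower semicontinuity, the entropy-defect measure of $w$ to be zero; thus $w$ is a \emph{reversible} entropy solution, an e.s.\ both forward and backward in time.

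The rigidity claim that such a reversible $w$ with $M(w(t,\cdot))\subset M(u_0)$ must equal the constant $C$ is the heart of the matter, and the step I expect to be the main obstacle, precisely because the flux is merely continuous and (\ref{ND}) has to be used in its exact non-affinity form rather than through a derivative. At the kinetic level a reversible solution obeys pure transport, so each nonzero-frequency Bohr coefficient of $w$ evolves through an oscillatory factor built from $\xi\cdot\varphi$; non-affinity of $u\mapsto\xi\cdot\varphi(u)$ removes every such oscillating component in the long-time average, so $w(t,\cdot)$ is $x$-independent, hence constant in $t$ by the equation, and equal to $C$ by mean conservation. As every subsequential limit of the precompact snapshot family $\{u(t,\cdot)\}_{t>0}$ is then $C$, the whole family converges and (\ref{dec}) follows.

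Finally, for sharpness assume (\ref{ND}) fails: there are $\xi_0\in M(u_0)$, $\xi_0\neq0$, and an interval $[a,b]$ on which $\xi_0\cdot\varphi(u)=\alpha u+\beta$. Choose a nonconstant almost periodic $g:\R\to[a,b]$ and set $u_0(x)=g(\xi_0\cdot x)$, so that $Sp(u_0)\subset M(u_0)=\Z\xi_0$. The function $u(t,x)=g(\xi_0\cdot x-\alpha t)$ depends on $x$ only through $s=\xi_0\cdot x$ and takes values in $[a,b]$; since there $\xi_0\cdot\varphi$ is affine with slope $\alpha$, one has $\div_x\varphi(u)=\partial_s(\xi_0\cdot\varphi(u))=\alpha\,u_s$, so (\ref{1}) reduces to the linear transport $u_t+\alpha\,u_s=0$, which $g(s-\alpha t)$ solves, and the Kruzhkov inequalities (\ref{2}) hold with equality because the effective flux is affine; thus $u$ is an e.s. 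By translation invariance of the Bohr mean, $\dashint_{\R^n}|u(t,x)-C|dx=\dashint_{\R^n}|g(\xi_0\cdot x)-C|dx$ is independent of $t$ and strictly positive since $g$ is nonconstant, so (\ref{dec}) fails. This produces the required non-decaying e.s.
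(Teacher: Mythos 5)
You should know at the outset that this paper contains no proof of Theorem~\ref{th2}: it is quoted as a result established in the preprint \cite{prep} (indeed, the present paper's Theorem~\ref{adth2} \emph{derives} the Bohr-group decay from Theorem~\ref{th2}, so the logical flow is opposite to yours). In \cite{prep} the forward implication is obtained by reduction to the periodic case: approximate $u_0$ in $\B^1(\R^n)$ by trigonometric polynomials with spectrum in $M(u_0)$, lift such data to a torus through a basis of the $\Q$-linear span of the spectrum (so that (\ref{ND}) becomes exactly the non-degeneracy condition for the lifted flux on the integer lattice), apply the author's decay theorem for \emph{periodic} entropy solutions with merely continuous flux, and close the approximation using the $L^1$-contraction in the Besicovitch norm. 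Your sharpness half coincides with the known construction and is correct: the travelling wave $u(t,x)=g(\xi_0\cdot x-\alpha t)$ with values in the affineness interval $[a,b]$ is an e.s. (to be complete you should check (\ref{2}) separately for $k\in[a,b]$, where the entropy flux along $\xi_0$ reduces to $\alpha|u-k|$, and for $k$ outside $[a,b]$, where $\sign(u-k)$ is constant; in both cases one gets equality), and translation invariance of the mean value gives non-decay; take $g$ continuous, nonconstant and periodic so that $Sp(u_0)\subset\Z\xi_0$.

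The forward direction, by contrast, has two genuine gaps at exactly the load-bearing points. First, the strong precompactness of the time translates is asserted via an ``H-measure (localization) argument'' transferred to the dual compact group. No such theory is available there: H-measures and their localization principles rest on pseudodifferential/commutator estimates in the Euclidean setting and, in their classical form, need more regularity of $\varphi$ than bare continuity; the extensions to merely continuous flux are themselves substantial theorems proved on $\R^n$, and $\B_n$ --- a non-metrizable compact group whose dual is $\R^n$ with the discrete topology, with no differentiable structure transverse to the embedded copy of $\R^n$ --- supports none of this machinery. Note also that $L^1(\B_n,m)$-precompactness does not follow from equicontinuity under small translations along the dense subgroup $in(\R^n)$ (which is all the PDE can give you), since neighborhoods of the identity in $\B_n$ contain points arbitrarily far away in $\R^n$. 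Second, your rigidity step for the reversible limit $w$ is argued ``at the kinetic level,'' with each Bohr--Fourier coefficient evolving ``through an oscillatory factor built from $\xi\cdot\varphi$.'' That transport picture presupposes a characteristic speed $\varphi'(\lambda)$, which does not exist when $\varphi$ is merely continuous, and the claim that non-affinity of $u\mapsto\xi\cdot\varphi(u)$ removes all oscillating components in the long-time average is precisely the content of the theorem, not an argument for it --- you flag this yourself as the main obstacle, but what you offer is a restatement rather than a proof. These two difficulties are the reason the actual proof detours through the periodic decay theorem (where the exact non-affinity condition is exploited by a measure-theoretic localization established there), instead of running a direct compactness-plus-rigidity scheme on the Bohr compactification. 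As it stands, your proposal establishes the exactness statement but not the decay (\ref{dec}).
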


\section{The Bohr compactification of $\R^n$. Some auxiliary lemmas.}

In the present paper we look at the problem (\ref{1}), (\ref{ini}) from another point. Namely, we will consider this problem for functions $u(t,x)$ defined for $x\in\B_n$, where $\B_n$ is the Bohr compactification of $\R^n$. This is a compact group, which can be identified with the spectrum of the algebra $AP(\R^n)$.
There is a continuous homomorphism of the groups $in:\R^n\to\B_n$ uniquely determined by the identity $\hat f(in(x))=f(x)$ for all $f\in AP(\R^n)$, where $f\to \hat f$ is the Gelfand transform.
It is known that the homomorphism $in$ has a null kernel (i.e., it is an embedding) and its image $in(\R^n)$ is dense in $\B_n$. Denote by $m$ the Haar measure on $\B_n$. This measure represents the mean value functional, that is,
for every almost periodic function $v(x)\in AP(\R^n)$
\begin{equation}\label{ad0}
\int_{\B_n} \hat v(x)dm(x)=\dashint_{\R^n} v(x)dx.
\end{equation}
In particular,
$$
\int_{\B_n} |\hat v(x)|dm(x)=\int_{\B_n} \widehat{|v(x)|} dm(x)=\dashint_{\R^n} |v(x)|dx.
$$
It follows from this identity that the Gelfand transform admits extension to an isomorphism $\B^1(\R^n)\mathop{\to} L^1(\B_n,m)$. We keep the notation $u\to \hat u$
and the name of Gelfand transform for such isomorphism.
It turns out that under the Gelfand transform the space $\B^1(\R^n)\cap L^\infty(\R^n)$ corresponds to the space $L^\infty(\B_n,m)$.

\begin{lemma}\label{lemad1}
An almost periodic function $u(x)$ belongs to the space $\B^1(\R^n)\cap L^\infty(\R^n)$ if and only if $\hat u(x)\in L^\infty(\B_n,m)$.
\end{lemma}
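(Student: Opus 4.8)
The plan is to exploit two isometric incarnations of the Gelfand transform simultaneously: the classical Gelfand isomorphism $AP(\R^n)\cong C(\B_n)$, which is isometric for the sup-norms (since $\B_n$ is the spectrum of the $C^*$-algebra $AP(\R^n)$ and $m$ has full support, the $L^\infty(\B_n,m)$-norm of a continuous function equals its sup), together with the isometry $\B^1(\R^n)\cong L^1(\B_n,m)$ recorded just before the lemma. The bridge between the two is the truncation operator: for $M>0$ let $T_M(z)=z\min(1,M/|z|)$ be the radial truncation onto the disk $\{|z|\le M\}$. It is $1$-Lipschitz, it fixes every value with $|z|\le M$, and composition with it maps $AP(\R^n)$ into itself and $C(\B_n)$ into itself. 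Because $T_M$ is $1$-Lipschitz it is a contraction for each of the (semi)norms $\|\cdot\|_{\sup}$, $N_1$ and $\|\cdot\|_{L^1(\B_n,m)}$, and this is exactly what makes the argument work.

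For the forward implication, I assume $u\in\B^1(\R^n)$ has a representative with $\|u\|_{L^\infty(\R^n)}\le M$. I choose trigonometric polynomials $P_j$ with $N_1(u-P_j)\to0$ and set $Q_j=T_M(P_j)\in AP(\R^n)$. Since $T_M$ is $1$-Lipschitz and $T_M(u)=u$ a.e., I obtain $N_1(u-Q_j)=N_1(T_M(u)-T_M(P_j))\le N_1(u-P_j)\to0$, hence $\widehat{Q_j}\to\widehat u$ in $L^1(\B_n,m)$ by the isometry. Each $Q_j$ has sup-norm $\le M$, so $\|\widehat{Q_j}\|_{L^\infty(\B_n,m)}=\|Q_j\|_{\sup}\le M$; passing to an a.e.-convergent subsequence yields $|\widehat u|\le M$ $\ m$-a.e., i.e. $\widehat u\in L^\infty(\B_n,m)$ with $\|\widehat u\|_{L^\infty}\le M$.

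For the converse, I assume $\widehat u\in L^\infty(\B_n,m)$ and set $M:=\|\widehat u\|_{L^\infty(\B_n,m)}$. Using density of $C(\B_n)$ in $L^1(\B_n,m)$ I pick $h_j\in C(\B_n)$ with $h_j\to\widehat u$ in $L^1(\B_n,m)$ and truncate, $g_j:=T_M(h_j)$; then $\|g_j\|_{\sup}\le M$ and, since $T_M(\widehat u)=\widehat u$ and $T_M$ is $1$-Lipschitz, $\|g_j-\widehat u\|_{L^1(\B_n,m)}\le\|h_j-\widehat u\|_{L^1(\B_n,m)}\to0$. Writing $g_j=\widehat{f_j}$ with $f_j\in AP(\R^n)$, the sup-norm isometry gives $\|f_j\|_{\sup}\le M$, while the $L^1$-isometry gives $N_1(u-f_j)=\|\widehat u-g_j\|_{L^1(\B_n,m)}\to0$. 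Now comes the decisive point: because $T_M(f_j)=f_j$ and $T_M$ is $1$-Lipschitz, the triangle inequality for the seminorm $N_1$ gives, for every $j$,
$$N_1(u-T_M(u))\le N_1(u-f_j)+N_1(T_M(f_j)-T_M(u))\le 2\,N_1(u-f_j)\to0.$$
Hence $N_1(u-T_M(u))=0$, so $T_M(u)$ represents the same class as $u$ in $\B^1(\R^n)$, and it satisfies $\|T_M(u)\|_{L^\infty(\R^n)}\le M$; thus $u\in\B^1(\R^n)\cap L^\infty(\R^n)$.

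I expect the converse to be the genuine obstacle. Convergence in $\B^1(\R^n)$ is only convergence in mean, so it gives no pointwise information on $\R^n$; moreover $in(\R^n)$ is an $m$-null subset of $\B_n$, so one cannot simply restrict a representative of $\widehat u\in L^\infty(\B_n,m)$ along $in$ to manufacture a bounded function on $\R^n$. The truncation trick sidesteps both difficulties: since the approximants $f_j$ are already bounded by $M$, the contraction property of $T_M$ forces $T_M(u)$ into the same $\B^1$-class as $u$, and this bounded truncate is the desired representative. The remaining points—that $T_M$ preserves $AP(\R^n)$ and $C(\B_n)$, that the sup-norm and the $L^\infty(\B_n,m)$-norm agree on $C(\B_n)$ because $m$ has full support, and that $N_1$ is a seminorm—are routine.
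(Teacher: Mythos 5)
Your proof is correct, and it takes a genuinely different route from the paper's. The paper first extends the commutation identity $\widehat{h(u)}=h(\hat u)$ from $AP(\R^n)$ to all $u\in\B^1(\R^n)$ for globally Lipschitz $h$ (via Bochner--Fej\'er approximation and the $L^1$-isometry), and then specializes to $h(u)=(|u|-M)^+$ to get the single mean-value identity (\ref{pres}),
$$
\dashint_{\R^n}(|u(x)|-M)^+dx=\int_{\B_n}(|\hat u(x)|-M)^+dm(x),
$$
from which both implications fall out at once: the forward direction because the left side vanishes when $M=\|u\|_\infty$, the converse because the right side vanishes when $M=\|\hat u\|_\infty$, whence the truncation $v=\max(-M,\min(M,u))$ satisfies $N_1(u-v)=0$. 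Note that $(|u|-M)^+=|u-T_M(u)|$, so (\ref{pres}) is precisely the statement $N_1(u-T_M(u))=\|\hat u-T_M(\hat u)\|_{L^1(\B_n,m)}$ that your converse establishes by hand. Where you genuinely diverge is in the machinery: your forward direction truncates the approximating polynomials and uses the sup-norm Gelfand isometry plus an a.e.-convergent subsequence, while your converse approximates $\hat u$ in $L^1(\B_n,m)$ by \emph{continuous functions on $\B_n$}, truncates there, and pulls back through the Gelfand--Naimark surjectivity $AP(\R^n)\cong C(\B_n)$ --- ingredients (density of $C(\B_n)$ in $L^1(\B_n,m)$, surjectivity onto $C(\B_n)$) that the paper never needs, since it works entirely on the $\R^n$ side. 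What your route buys: you avoid justifying $\widehat{h(u)}=h(\hat u)$ at the $\B^1$ level, and your radial truncation $T_M(z)=z\min(1,M/|z|)$ handles complex-valued functions, whereas the paper's $\max/\min$ truncation is written for real values. What the paper's route buys: one identity proving both directions simultaneously, reusable Lipschitz-composition machinery, and an immediate derivation of the remark following the lemma that $\|\hat u\|_\infty=\inf\{\|v\|_\infty : v=u \mbox{ in } \B^1(\R^n)\}$ (your argument yields this too, but less directly). One cosmetic point: the full-support property of $m$ that you invoke in the preamble is never actually needed, since you only use the inequality $\|\hat g\|_{L^\infty(\B_n,m)}\le\|g\|_{\sup}$, which holds for any probability measure.
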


\begin{proof}
It follows from the property of Gelfand transform that $\widehat{h(u)}=h(\hat u)$ for every $u=u(x)\in AP(\R^n)$, $h=h(u)\in C(\R)$. If $u(x)\in \B^1(\R^n)$ then we can find a sequence $u_r\in AP(\R^n)$ such that
$u_r\to u$ in $\B^1(\R^n)$ as $r\to\infty$ (for instance we can take the Bochner-Fej\'er approximations, see \cite{Bes,Lev}). Since
$$
\|\widehat{u_r}-\widehat{u}\|_{L^1(\B_n,m)}=\|u_r-u\|_{\B^1(\R^n)}\mathop{\to}_{r\to\infty} 0,
$$
then $\widehat{u_r}\to \widehat{u}$ in $L^1(\B_n,m)$ as $r\to\infty$. If $h(u)$ is globally Lipschitz continuous, then $h(u_r)\to h(u)$ in
$\B^1(\R^n)$, $h(\widehat{u_r})\to h(\widehat{u})$ in $L^1(\B_n,m)$ as $r\to\infty$. The former relation implies that $h(\widehat{u_r})=\widehat{h(u_r)}\to\widehat{h(u)}$ in $L^1(\B_n,m)$ as $r\to\infty$, and we claim that
$\widehat{h(u)}=h(\widehat{u})$. In particular, it follows from (\ref{ad0}) that
$$
\dashint_{\R^n} h(u(x))dx=\int_{\B_n} h(\widehat{u}(x))dm(x).
$$
Taking in this relation $h(u)=(|u|-M)^+\doteq\max(0,|u|-M)$, $M\ge 0$, we arrive at
\begin{equation}\label{pres}
\dashint_{\R^n} (|u(x)|-M)^+dx=\int_{\B_n} (|\hat u(x)|-M)^+ dm(x).
\end{equation}
If $u(x)\in\B^1(\R^n)\cap L^\infty(\R^n)$, and $M=\|u\|_\infty$ then it follows from (\ref{pres}) that $(|\hat u(x)|-M)^+=0$, that is, $|\hat u(x)|\le M$ $m$-a.e. in $\B_n$. This means that
$\hat u\in L^\infty(\B_n,m)$, $\|\hat u\|_\infty\le M$. Conversely, if $\hat u\in L^\infty(\B_n,m)$, $M=\|\hat u\|_\infty$, then ${\displaystyle\dashint_{\R^n} (|u(x)|-M)^+dx=0}$. We introduce the function
$v(x)=\max(-M,\min(M,u(x)))\in\B^1(\R^n)\cap L^\infty(\R^n)$. Evidently, $\displaystyle N_1(u-v)=\dashint_{\R^n} (|u(x)|-M)^+dx=0$. Therefore, $u=v$ in $\B^1(\R^n)$,
and we conclude that $u\in\B^1(\R^n)\cap L^\infty(\R^n)$, as required.
\end{proof}

Remark that, as one can realize from the proof of Lemma~\ref{lemad1},
$$\|\hat u\|_\infty=\inf\{ \ \|v\|_\infty \ | \ v\in L^\infty(\R^n), \ v=u \mbox{ in } \B^1(\R^n) \ \}.$$

Let $\rho(s)\in C_0(\R)$ be a nonnegative function such that ${\displaystyle\int_{-\infty}^{+\infty}\rho(s)ds=1}$. We introduce the approximate unity $\delta_\nu(s)=\nu\rho(\nu s)$, $\nu\in\N$. If $v(t)\in L^\infty(\R)$, then almost all $t\in\R$ are Lebesgue points of $v(t)$, which implies that for such $t$
$\displaystyle\lim_{\nu\to\infty}\int_{\R} |v(t)-v(s)|\delta_\nu(t-s)ds=0$. Integrating this relation over $t\in\R$
(with the help of Lebesgue dominated convergence theorem), we obtain that
\begin{equation}\label{Leb1}
\lim_{\nu\to\infty}\int_{\R^2} |v(t)-v(s)|\chi(t)\delta_\nu(t-s)dtds=0
\end{equation}
for every function $\chi(t)\in L^1(\R)$.
Moreover, if $\omega(r)$ is a continuous nonnegative function on $[0,+\infty)$ such that $\omega(0)=0$, then
\begin{equation}\label{Leb2}
\lim_{\nu\to\infty}\int_{\R^2} \omega(|v(t)-v(s)|)\chi(t)\delta_\nu(t-s)dtds=0,
\end{equation}
cf. the proof of Corollary~\ref{corad2} below.

We need also the following well-known property of almost periodic functions.
\begin{lemma}\label{lemad2}
Let $v(x)\in\B^1(\R^n)$ and $g(y)\in C_0(\R^n)$. Then
\begin{equation}\label{aver}
\lim_{R\to+\infty} R^{-n}\int_{\R^n} v(x)g(x/R)dx=C\dashint_{\R^n} v(x)dx,
\end{equation}
where $\displaystyle C=\int_{\R^n} g(y)dy$.
\end{lemma}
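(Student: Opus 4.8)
The plan is to establish (\ref{aver}) first for exponentials, extend it by linearity to trigonometric polynomials, and then pass to general $v\in\B^1(\R^n)$ by density, using a uniform-in-$R$ bound on the averaging functional to absorb the approximation error.

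First I would treat $v(x)=e^{2\pi i\lambda\cdot x}$, $\lambda\in\R^n$. The substitution $x=Ry$ turns $R^{-n}\int_{\R^n}v(x)g(x/R)dx$ into $\int_{\R^n} g(y)e^{2\pi iR\lambda\cdot y}dy$, i.e. (up to the sign convention) the Fourier transform of $g$ at the point $R\lambda$. If $\lambda=0$ this equals $\int_{\R^n} g(y)dy=C$, which matches $C\dashint_{\R^n}1\,dx=C$. If $\lambda\not=0$ then $|R\lambda|\to+\infty$ and, since $g\in C_0(\R^n)\subset L^1(\R^n)$, the Riemann--Lebesgue lemma gives that this integral tends to $0$, matching $C\dashint_{\R^n}e^{2\pi i\lambda\cdot x}dx=0$. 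By linearity, (\ref{aver}) then holds for every trigonometric polynomial.

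For general $v$, write $A_R(w)=R^{-n}\int_{\R^n}w(x)g(x/R)dx$. The key point is a uniform bound on $A_R$ in terms of the mean $L^1$-norm. Choosing $a>0$ with $\supp g\subset C_a$, the function $g(x/R)$ is supported in $C_{aR}$, so $|A_R(w)|\le\|g\|_\infty R^{-n}\int_{C_{aR}}|w(x)|dx=\|g\|_\infty a^n (aR)^{-n}\int_{C_{aR}}|w(x)|dx$. Letting $R\to+\infty$ (equivalently $aR\to+\infty$) and using the definition of $N_1$ yields $\limsup_{R\to+\infty}|A_R(w)|\le\|g\|_\infty a^n N_1(w)=\|g\|_\infty a^n\|w\|_1$ for every $w\in\B^1(\R^n)$.

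Finally I would pick trigonometric polynomials $v_r\to v$ in $\B^1(\R^n)$ (the Bochner--Fej\'er approximations) and decompose $A_R(v)-C\dashint_{\R^n}v\,dx$ as $A_R(v-v_r)+\bigl[A_R(v_r)-C\dashint_{\R^n}v_r\,dx\bigr]+C\dashint_{\R^n}(v_r-v)dx$. The middle bracket vanishes as $R\to+\infty$ by the polynomial case; the first term is controlled by the uniform bound above; and the last is bounded by $|C|\,N_1(v_r-v)$, since $\bigl|\dashint_{\R^n}(v_r-v)dx\bigr|\le N_1(v_r-v)$. Hence $\limsup_{R\to+\infty}\bigl|A_R(v)-C\dashint_{\R^n}v\,dx\bigr|\le(\|g\|_\infty a^n+|C|)\|v-v_r\|_1$, and sending $r\to\infty$ gives (\ref{aver}). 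The main obstacle, and the only content beyond Riemann--Lebesgue, is the uniform bound on $A_R$: it is precisely the compactness of $\supp g$, rescaled to the cube $C_{aR}$, that lets the defining limsup of $N_1$ absorb the perturbation $w=v-v_r$ independently of $R$.
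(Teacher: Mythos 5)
Your proof is correct and follows essentially the same route as the paper: the same rescaled-support bound $\limsup_{R\to+\infty}|A_R(w)|\le\|g\|_\infty a^n\|w\|_1$ giving continuity of both sides in $v$, density of trigonometric polynomials, and the substitution $x=Ry$ for exponentials. The only cosmetic difference is that you invoke the Riemann--Lebesgue lemma where the paper cites the weak-$*$ convergence $e^{2\pi iR\lambda\cdot y}\rightharpoonup 0$ in $L^\infty(\R^n)$, which is the same oscillation fact.
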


\begin{proof}
Let $k>0$ be so large that $\supp g$ is included in the cube $C_k$, and $M=\|g\|_\infty$. Then
\begin{eqnarray*}
\limsup_{R\to+\infty}R^{-n}\left|\int_{\R^n} v(x)g(x/R)dx\right|\le \\ Mk^n \limsup_{R\to+\infty}(kR)^{-n}\int_{C_{kR}}|v(x)|dx=Mk^n\|v\|_{\B^1(\R^n)}.
\end{eqnarray*}
The above relation implies that both parts of equality (\ref{aver}) are continuous with respect to $v\in\B^1(\R^n)$.
Since trigonometric polynomials are dense in $\B^1(\R^n)$, it is sufficient to prove (\ref{aver})
for functions $v(x)=e^{2\pi i\lambda\cdot x}$, $\lambda\in\R^n$. For such functions, making the change $y=x/R$, we obtain
\begin{eqnarray*}
R^{-n}\int_{\R^n} v(x)g(x/R)dx=\int_{\R^n} e^{2\pi iR\lambda\cdot y}g(y)dy\mathop{\to}_{R\to+\infty}
\left\{\begin{array}{lr} 0, & \lambda\not=0, \\ C, & \lambda=0 \end{array}\right. = C\dashint_{\R^n} v(x)dx,
\end{eqnarray*}
since $\displaystyle e^{2\pi iR\lambda\cdot y}\mathop{\rightharpoonup}_{R\to+\infty} 0$ weakly-$*$ in $L^\infty(\R^n)$ if $\lambda\not=0$. The proof is complete.
\end{proof}

Let $\Lambda=\{\lambda_j\}_{j=1}^N$ be an at most countable subset of $\R^n$, consisting of vectors independent over the field of rationales $\Q$. Here $N\in\N\cup\{\infty\}$. We define the corresponding sequence of Bochner-Fej\'er kernels
\begin{eqnarray*}
\Phi_r(x)=\sum_{\bar k\in\Z^{N_r},|\bar k|_\infty< (r+1)!}\prod_{j=1}^{N_r}\left(1-\frac{|k_j|}{(r+1)!}\right)e^{\frac{2\pi i}{r!}\sum_{j=1}^{N_r}k_j\lambda_j\cdot x}=\\
\frac{1}{((r+1)!)^{N_r}}\prod_{j=1}^{N_r}\frac{\sin^2 (\pi (r+1)\lambda_j\cdot x)}{\sin^2 (\pi\lambda_j\cdot x/r!)},
\end{eqnarray*}
where $N_r=N$ if $N<\infty$, $N_r=r$, otherwise.

\begin{lemma}\label{lemad3}
Let $v(x)\in\B^1(\R^n)$, and $\Phi_r(x)$, $r\in\N$ be the sequence
of Bochner-Fej\'er kernels corresponding to the basis $\Lambda$ of $\Q$-linear subspace $H$ of $\R^n$ generated by $Sp(v)$. Then
\begin{equation}\label{ad1}
\lim_{r\to\infty}\dashint_{\R^n}\left(\dashint_{\R^n}|v(x)-v(y)|\Phi_r(x-y)dy\right)dx=0.
\end{equation}
\end{lemma}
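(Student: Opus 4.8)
Throughout I write $I_r(v)$ for the left-hand side of (\ref{ad1}) and $v_r(x)=\dashint_{\R^n}v(y)\Phi_r(x-y)\,dy$ for the Bochner--Fej\'er mean of $v$. Two preliminary observations guide the whole argument. First, $\Phi_r\ge0$ and $\dashint_{\R^n}\Phi_r(z)\,dz=1$: the term $\bar k=0$ contributes the constant $1$, while every other exponential $e^{\frac{2\pi i}{r!}\sum k_j\lambda_j\cdot x}$ has zero mean value since $\sum k_j\lambda_j\ne0$ by the $\Q$-independence of $\Lambda$. Consequently $\Phi_r(x-y)\,dy$ is a unit-mass average, and convexity of $t\mapsto|t|$ gives $I_r(v)\ge\dashint_{\R^n}|v(x)-v_r(x)|\,dx=\|v-v_r\|_1$. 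Thus (\ref{ad1}) is \emph{stronger} than the convergence $v_r\to v$ in $\B^1(\R^n)$ and cannot be read off from it; a modulus-of-continuity argument is needed.

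First I would prove the uniform Lipschitz estimate $|I_r(v)-I_r(w)|\le2\|v-w\|_1$, valid for all $v,w\in\B^1(\R^n)$ and all $r$. It follows from $\bigl||v(x)-v(y)|-|w(x)-w(y)|\bigr|\le|v(x)-w(x)|+|v(y)-w(y)|$, the nonnegativity of $\Phi_r$, the identity $\dashint_{\R^n}\Phi_r=1$ (applied after the change $y\mapsto x-y$, the cubes $C_R$ being symmetric), and a Fubini theorem for iterated mean values of almost periodic functions. Since $H$ is generated by $Sp(v)$ we have $Sp(v)\subset H$, and trigonometric polynomials with spectrum in $H$ are dense in $\{w\in\B^1(\R^n):Sp(w)\subset H\}$ by Bochner--Fej\'er approximation. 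By the uniform estimate it therefore suffices to establish (\ref{ad1}) for such polynomials $v$, the kernels $\Phi_r$ staying fixed; for these the mean-value manipulations below are elementary, $v$ being a genuine continuous almost periodic function.

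For such $v$, substituting $z=x-y$ in the inner mean value (legitimate, the mean value being invariant under translations and under $y\mapsto-y$) and interchanging the two mean values, I would rewrite $I_r(v)=\dashint_{\R^n}\Phi_r(z)\,\omega(z)\,dz$, where $\omega(z)=\dashint_{\R^n}|v(x)-v(x-z)|\,dx=\|v-v(\cdot-z)\|_1$ is the $\B^1$-modulus of continuity of $v$. The function $\omega$ satisfies $0\le\omega(z)\le2\|v\|_1$ and $\omega(0)=0$, and is continuous almost periodic, being the composition of the almost periodic map $z\mapsto v(\cdot-z)\in\B^1(\R^n)$ with the norm. Because $\Phi_r$ is even, $\dashint_{\R^n}\Phi_r(z)\omega(z)\,dz=\omega_r(0)$ is the value at the origin of the Bochner--Fej\'er mean $\omega_r$ of $\omega$.

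The proof then closes provided $\omega_r(0)\to\omega(0)=0$, and for this it is enough that $Sp(\omega)\subset H$: the Bochner--Fej\'er means built from a basis $\Lambda$ of $H$ converge uniformly on $\R^n$ to every almost periodic function with spectrum in $H$ (see \cite{Bes,Lev}), whence $\omega_r(0)\to\omega(0)$. I expect the inclusion $Sp(\omega)\subset H$ to be the main obstacle, since $|v(x)-v(x-z)|$ is not a trigonometric polynomial and the nonlinear operation $t\mapsto|t|$ may a priori create new frequencies; the point is that $\omega(z)$ depends on $z$ only through the way translation by $z$ acts on the $H$-frequencies of $v$. I would make this precise on the Bohr compactification: let $\pi:\B_n\to G_H$ be the canonical projection onto the compact dual $G_H$ of the discrete group $H$. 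The hypothesis $Sp(v)\subset H$ gives $\hat v=\tilde v\circ\pi$ for some $\tilde v\in L^1(G_H)$, and since $\pi$ carries the Haar measure of $\B_n$ to that of $G_H$, the identity $\omega(z)=\|\hat v-\hat v(\cdot-in(z))\|_{L^1(\B_n)}=\|\tilde v-\tilde v(\cdot-\pi(in(z)))\|_{L^1(G_H)}$ shows that $\hat\omega$ factors through $\pi$, i.e. is constant on the cosets of $\ker\pi$. This is exactly the statement $Sp(\omega)\subset H$, which completes the argument.
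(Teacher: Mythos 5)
Your argument is correct, and after the shared first half it takes a genuinely different route from the paper's. Both proofs begin identically: the Lipschitz estimate $|I_r(v)-I_r(w)|\le 2\|v-w\|_1$ (the paper's (\ref{ad2}), resting on $\Phi_r\ge 0$, $\dashint_{\R^n}\Phi_r(z)dz=1$ and the interchange identity (\ref{comm})) reduces (\ref{ad1}) to trigonometric polynomials with spectrum in $H$, and both ultimately invoke the same external theorem, namely uniform convergence of the Bochner--Fej\'er means built on a basis of $H$ for Bohr almost periodic functions with spectrum in $H$. The difference is where the nonlinear operation $t\mapsto|t|$ is tamed. The paper splits a polynomial $w$ term by term, $\dashint_{\R^n}|w(x)-w(y)|\Phi_r(x-y)dy\le\sum_\lambda|a_\lambda|\dashint_{\R^n}h_\lambda(z)\Phi_r(z)dz$ with $h_\lambda(z)=|e^{2\pi i\lambda\cdot z}-1|$ (its (\ref{ad3})), so the only spectral input is the inspection-level fact that $h_\lambda=2|\sin(\pi\lambda\cdot z)|$ is periodic with spectrum in $\Z\lambda\subset H$, and the bound is uniform in $x$, so a single Fej\'er-summation at $z=0$ finishes. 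You instead keep $|v(x)-v(y)|$ intact, identify $I_r(v)=\dashint_{\R^n}\Phi_r(z)\omega(z)dz$ with the $\B^1$-modulus $\omega(z)=\|v-v(\cdot-z)\|_1$ (using evenness of $\Phi_r$), and isolate the one nontrivial claim $Sp(\omega)\subset H$, which you prove by factoring $\hat\omega$ through the projection $\pi:\B_n\to G_H$ dual to the inclusion of $H$ into $\R^n$ with the discrete topology; your duality steps ($\pi$ surjective and Haar-measure-preserving, and constancy on cosets of $\ker\pi$ equivalent to $Sp(\omega)\subset(\ker\pi)^\perp=H$) are standard and sound. The paper's version buys elementarity: no harmonic analysis on $\B_n$ is needed, and the spectral inclusion is obvious for each $h_\lambda$ separately. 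Your version buys conceptual clarity: the identity $I_r(v)=\omega_r(0)$ exhibits (\ref{ad1}) as precisely a Fej\'er-summability statement at the origin, the factorization argument is in the spirit of the paper's own Corollary~\ref{corad1}, and it nearly dispenses with the polynomial reduction altogether, since $\Omega(g)=\|\tilde v-\tilde v(\cdot-g)\|_{L^1(G_H)}$ is continuous for arbitrary $\tilde v\in L^1(G_H)$ by continuity of translation in $L^1$ of a compact group; only the Fubini interchange for general $v\in\B^1(\R^n)$ still requires an approximation step, exactly as in the paper's justification of (\ref{comm}).
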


\begin{proof}
For a positive $\varepsilon$ we can choose a trigonometric polynomial $\displaystyle w(x)=\sum_{\lambda\in\Lambda} a_\lambda e^{2\pi i\lambda\cdot x}$
such that $\Lambda=Sp(w)\subset Sp(v)$ and $$\|v-w\|_{\B^1(\R^n)}=\dashint_{\R^n}|v(x)-w(x)|dx<\varepsilon/2.$$ Since, evidently,
$$
||v(x)-v(y)|-|w(x)-w(y)||\le |v(x)-w(x)|+|v(y)-w(y)|
$$
while $\Phi_r\ge 0$, then
\begin{eqnarray}\label{ad2}
\left|\dashint_{\R^n}\left(\dashint_{\R^n}|v(x)-v(y)|\Phi_r(x-y)dy\right)dx-\right. \nonumber\\ \left. \dashint_{\R^n}\left(\dashint_{\R^n}|w(x)-w(y)|\Phi_r(x-y)dy\right)dx\right|\le\nonumber\\
\dashint_{\R^n}\left(\dashint_{\R^n}|v(x)-w(x)|\Phi_r(x-y)dy\right)dx+ \nonumber\\ \dashint_{\R^n}\left(\dashint_{\R^n}|v(y)-w(y)|\Phi_r(x-y)dx\right)dy=\nonumber\\
\dashint_{\R^n}|v(x)-w(x)|dx+\dashint_{\R^n}|v(y)-w(y)|dy=2\dashint_{\R^n}|v(x)-w(x)|dx<\varepsilon.
\end{eqnarray}
We use here that $\displaystyle\dashint_{\R^n}\Phi_r(z)dz=1$ for all $r\in\N$, and that
\begin{equation}\label{comm}
\dashint_{\R^n}\left(\dashint_{\R^n}G(x,y)dx\right)dy=\dashint_{\R^n}\left(\dashint_{\R^n}G(x,y)dy\right)dx=
\dashint_{\R^{2n}}G(x,y)dxdy,
\end{equation}
where $G(x,y)=|v(y)-w(y)|\Phi_r(x-y)$. Indeed, if $v(y)\in AP(\R^n)$ then $G(x,y)\in AP(\R^{2n})$ and (\ref{comm}) follows from results of \cite[Ch. I, \S~12]{Bes}. The general case $v(y)\in\B^1(\R^n)$ is treated with the help of approximation of $v(y)$ by Bohr almost periodic functions $v_r\in AP(\R^n)$, $r\in\N$, and passage to the limit as $r\to\infty$. Observe also that
\begin{eqnarray}\label{ad3}
\dashint_{\R^n}|w(x)-w(y)|\Phi_r(x-y)dy\le \sum_{\lambda\in\Lambda} |a_\lambda|\dashint_{\R^n}|e^{2\pi i\lambda\cdot x}-e^{2\pi i\lambda\cdot y}|\Phi_r(x-y)dy=\nonumber\\
\sum_{\lambda\in\Lambda} |a_\lambda|\dashint_{\R^n}|e^{2\pi i\lambda\cdot (x-y)}-1|\Phi_r(x-y)dy\doteq I_r.
\end{eqnarray}
Since the function $h_\lambda(z)=|e^{2\pi i\lambda\cdot z}-1|$ is a Bohr almost periodic function and its spectrum lays in $H$, then
$$
\dashint_{\R^n}|e^{2\pi i\lambda\cdot z}-1|\Phi_r(z)dz=\dashint_{\R^n}h_\lambda(z)\Phi_r(z)dz\mathop{\to}_{r\to\infty} h_\lambda(0)=0.
$$
This together with finiteness of $\Lambda$ implies that $I_r\to 0$ as $r\to\infty$. By (\ref{ad3}) the sequence
$$\dashint_{\R^n}|w(x)-w(y)|\Phi_r(x-y)dy \mathop{\to}_{r\to\infty} 0
$$
uniformly in $x$. This implies the relation
$$
\lim_{r\to\infty}\dashint_{\R^n}\left(\dashint_{\R^n}|w(x)-w(y)|\Phi_r(x-y)dy\right)dx=0.
$$
From this relation and (\ref{ad2}) it follows that
$$
\limsup_{r\to\infty}\dashint_{\R^n}\left(\dashint_{\R^n}|v(x)-v(y)|\Phi_r(x-y)dy\right)dx\le\varepsilon.
$$
Since $\varepsilon>0$ is arbitrary, then (\ref{ad1}) follows. The proof is complete.
\end{proof}

\begin{corollary}\label{corad1}
Let $\B_n$ be the Bohr compactification of $\R^n$, $\widehat{\Phi_r}(x)$, $r\in\N$, be the extension of Bochner-Fej\'er kernels on $\B_n$ (i.e., the Gelfand transform of $\Phi_r$). Then for each $v(x)\in L^1(\B_n,m)$
\begin{equation}\label{ad4}
\lim_{r\to\infty}\int_{\B_n\times\B_n}|v(x)-v(y)|\widehat{\Phi_r}(x-y)dm(x)dm(y)=0.
\end{equation}
\end{corollary}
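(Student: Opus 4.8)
The plan is to read (\ref{ad4}) as the transcription of Lemma~\ref{lemad3} under the Gelfand isomorphism. Recall that the Gelfand transform is an isometric isomorphism $\B^1(\R^n)\to L^1(\B_n,m)$ sending mean values to Haar integrals, cf. (\ref{ad0}). Hence, given $v\in L^1(\B_n,m)$, I would first pick its preimage $u\in\B^1(\R^n)$, $\widehat u=v$, and take $\Phi_r$ to be the Bochner--Fej\'er kernels associated with a $\Q$-basis of the $\Q$-span of the (at most countable) spectrum $Sp(u)$, so that $\widehat{\Phi_r}$ is exactly the kernel in the statement. The whole point is then to prove the identity
\begin{equation*}
\int_{\B_n\times\B_n}|v(x)-v(y)|\widehat{\Phi_r}(x-y)\,dm(x)dm(y)=\dashint_{\R^{2n}}|u(x)-u(y)|\Phi_r(x-y)\,dxdy,
\end{equation*}
after which Lemma~\ref{lemad3}, together with the commutation of iterated and double mean values (\ref{comm}), finishes the proof, since the right-hand side tends to $0$ as $r\to\infty$.

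To establish this identity I would work on the doubled group, using that the Bohr compactification of $\R^{2n}$ is $\B_n\times\B_n$ with Haar measure $m\otimes m$, and that (\ref{ad0}) extends to $\R^{2n}$: one has $\int_{\B_n\times\B_n}\widehat g\,d(m\otimes m)=\dashint_{\R^{2n}}g\,dxdy$ for $g\in\B^1(\R^{2n})$. It then suffices to check that the integrand $|v(x)-v(y)|\widehat{\Phi_r}(x-y)$ is the Gelfand transform of $g_r(x,y)=|u(x)-u(y)|\Phi_r(x-y)\in\B^1(\R^{2n})$. This rests on three facts already available in the paper: the function $(x,y)\mapsto u(x)-u(y)$ lies in $\B^1(\R^{2n})$; composition with the $1$-Lipschitz map $s\mapsto|s|$ preserves $\B^1$ and commutes with the Gelfand transform (the Bochner--Fej\'er argument of Lemma~\ref{lemad1}); and multiplication by the bounded trigonometric polynomial $\Phi_r(x-y)\in AP(\R^{2n})$ likewise commutes with the Gelfand transform.

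The step that needs the most care is verifying this integrand correspondence for a possibly \emph{unbounded} representative $u$, where multiplicativity $\widehat{h(u)}=h(\widehat u)$ is only immediate for trigonometric arguments. I would first establish the transport identity for trigonometric polynomials $u_k$, where all commutations are trivial, and then pass to the limit $u_k\to u$ in $\B^1(\R^n)$. Both sides of the identity are $2$-Lipschitz in their argument: for the left-hand side, $\bigl||v(x)-v(y)|-|w(x)-w(y)|\bigr|\le|v(x)-w(x)|+|v(y)-w(y)|$, together with $\widehat{\Phi_r}\ge0$ and $\int_{\B_n}\widehat{\Phi_r}\,dm=1$, yields after integrating out one variable the bound $2\|v-w\|_{L^1(\B_n,m)}$; the right-hand side obeys the analogous bound in the mean $L^1$-norm. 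This common continuity lets the identity pass to the limit, and it is the only genuine obstacle; everything else is the bookkeeping already carried out in Lemmas~\ref{lemad1} and \ref{lemad3}.
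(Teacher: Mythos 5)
Your proposal is correct and follows essentially the same route as the paper: both reduce (\ref{ad4}) to Lemma~\ref{lemad3} by a Gelfand-transference identity equating the double Haar integral over $\B_n\times\B_n$ with the mean value of $|u(x)-u(y)|\Phi_r(x-y)$ over $\R^{2n}$, using (\ref{ad0}) and (\ref{comm}). The only difference is technical: the paper verifies the identity slice-by-slice (transforming first in $y$ for fixed $x$ to get $F(x)$ in (\ref{ad5}), then transforming $F$ itself, with the second step left as ``easily verified''), whereas you establish it in one stroke on $\B_n\times\B_n$ viewed as the Bohr compactification of $\R^{2n}$, supplying via the trigonometric-polynomial density and the $2$-Lipschitz stability bound exactly the approximation argument the paper glosses over.
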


\begin{proof}
There exists a unique function $u(x)\in\B^1(\R^n)$ such that $v=\hat u$. Since for each fixed $x\in\R^n$
$$
(|u(x)-u(\cdot)|\Phi_r(x-\cdot))^{\wedge}=|u(x)-v(y)|\widehat{\Phi_r}(x-y),
$$
we have
\begin{equation}\label{ad5}
F(x)\doteq\dashint_{\R^n}|u(x)-u(y)|\Phi_r(x-y)dy=\int_{\B_n}|u(x)-v(y)|\widehat{\Phi_r}(x-y)dm(y).
\end{equation}
As can be easily verified, for all $x\in\B_n$
$$
\hat F(x)=\int_{\B_n}|v(x)-v(y)|\widehat{\Phi_r}(x-y)dm(y).
$$
This equality and (\ref{ad5}) imply the relation
\begin{eqnarray}\label{ad6}
\dashint_{\R^n}\left(\dashint_{\R^n}|u(x)-u(y)|\Phi_r(x-y)dy\right)dx=\dashint_{\R^n} F(x)dx=\nonumber\\
\int_{\B_n} \hat F(x) dm(x)=
\int_{\B_n}\left(\int_{\B_n}|v(x)-v(y)|\widehat{\Phi_r}(x-y)dm(y)\right)dm(x)=\nonumber\\ \int_{\B_n\times\B_n}|v(x)-v(y)|\widehat{\Phi_r}(x-y)dm(x)dm(y).
\end{eqnarray}
Now relation (\ref{ad4}) follows from (\ref{ad6}) and the statement of Lemma~\ref{lemad3}.
\end{proof}
\begin{corollary}\label{corad2}
Let $\omega(r)$ be a continuous function on $[0,+\infty)$ such that ${\omega(r)\ge\omega(0)=0}$, and
$v(x)\in L^\infty(\B_n,m)$. Then
\begin{equation}\label{ad6a}
\lim_{r\to\infty}\int_{\B_n\times\B_n}\omega(|v(x)-v(y)|)\widehat{\Phi_r}(x-y)dm(x)dm(y)=0.
\end{equation}
\end{corollary}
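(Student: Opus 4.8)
The plan is to deduce Corollary~\ref{corad2} directly from Corollary~\ref{corad1} by an elementary majorization argument, exploiting the fact that $v\in L^\infty(\B_n,m)$ confines the argument of $\omega$ to a compact interval. First I would set $K=2\|v\|_\infty$ and observe that, since $m$ is a probability measure, $v\in L^\infty(\B_n,m)\subset L^1(\B_n,m)$, so Corollary~\ref{corad1} applies and moreover $|v(x)-v(y)|\le K$ for $m\times m$-a.e.\ $(x,y)\in\B_n\times\B_n$. Hence the integrand $\omega(|v(x)-v(y)|)$ depends only on the values of $\omega$ on the compact interval $[0,K]$.

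The key step is to dominate $\omega$ on $[0,K]$ by an affine function of the form $\varepsilon+(M/\delta)s$. Indeed, fix $\varepsilon>0$; since $\omega$ is continuous on $[0,K]$ with $\omega(0)=0$, uniform continuity gives a $\delta\in(0,K]$ with $\omega(s)<\varepsilon$ for $0\le s\le\delta$. Setting $M=\max_{[0,K]}\omega<\infty$, one checks that for $s\in[\delta,K]$ one has $\omega(s)\le M\le (M/\delta)s$, while for $s\in[0,\delta)$ one has $\omega(s)<\varepsilon$; in all cases
$$\omega(s)\le\varepsilon+\frac{M}{\delta}\,s,\qquad 0\le s\le K.$$
I would then insert this pointwise bound into the integral and use $\widehat{\Phi_r}\ge0$ (which transfers from $\Phi_r\ge0$ under the Gelfand transform) to split
$$\int_{\B_n\times\B_n}\omega(|v(x)-v(y)|)\widehat{\Phi_r}(x-y)\,dm(x)\,dm(y)\le\varepsilon\,A_r+\frac{M}{\delta}\,B_r,$$
where $A_r=\int_{\B_n\times\B_n}\widehat{\Phi_r}(x-y)\,dm(x)\,dm(y)$ and $B_r$ is the integral appearing in Corollary~\ref{corad1}.

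The remaining ingredient is to evaluate $A_r$. By (\ref{ad0}) and $\dashint_{\R^n}\Phi_r(z)\,dz=1$ one has $\int_{\B_n}\widehat{\Phi_r}(z)\,dm(z)=1$, and translation-invariance of the Haar measure together with $m(\B_n)=1$ gives $A_r=1$ for every $r$. Since $B_r\to0$ by Corollary~\ref{corad1}, passing to the upper limit yields $\limsup_{r\to\infty}\int_{\B_n\times\B_n}\omega(|v(x)-v(y)|)\widehat{\Phi_r}(x-y)\,dm(x)\,dm(y)\le\varepsilon$, and letting $\varepsilon\to0$ gives (\ref{ad6a}). I do not anticipate a genuine obstacle here: everything reduces to Corollary~\ref{corad1}, and the only points requiring a word of care are the nonnegativity and unit mass of $\widehat{\Phi_r}$ under the Gelfand transform and the restriction of $\omega$ to the compact range $[0,K]$ afforded by the $L^\infty$ hypothesis. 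This is precisely the scheme alluded to after (\ref{Leb2}).
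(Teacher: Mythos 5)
Your proof is correct and follows essentially the same route as the paper's: the same affine domination $\omega(s)\le\varepsilon+Cs$ on $[0,2\|v\|_\infty]$ derived from the continuity of $\omega$ at $0$, followed by an appeal to Corollary~\ref{corad1} and letting $\varepsilon\to 0$. The only difference is cosmetic --- you make explicit the unit mass $\int_{\B_n\times\B_n}\widehat{\Phi_r}(x-y)\,dm(x)\,dm(y)=1$ and the nonnegativity of $\widehat{\Phi_r}$, which the paper uses implicitly.
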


\begin{proof}
Let $M=\|v\|_\infty$ and $\varepsilon>0$. Then there exists a constant $C>0$ such that
\begin{equation}\label{ad6b}
\omega(r)\le\varepsilon+Cr \quad \forall r\in [0,2M].
\end{equation}
Indeed, we can find a positive $\delta$ such that $\omega(r)<\varepsilon$ for $0\le r<\delta$. Taking $\displaystyle C=\frac{1}{\delta}\max_{\delta\le r\le 2M}\omega(r)$, we see that (\ref{ad6b}) is satisfied. In view of (\ref{ad6b}) and Corollary~\ref{corad1} we obtain the relation
\begin{eqnarray*}
\limsup_{r\to\infty}\int_{\B_n\times\B_n}\omega(|v(x)-v(y)|)\widehat{\Phi_r}(x-y)dm(x)dm(y)\le \\
\varepsilon+C\lim_{r\to\infty}\int_{\B_n\times\B_n}|v(x)-v(y)|\widehat{\Phi_r}(x-y)dm(x)dm(y)=\varepsilon.
\end{eqnarray*}
Since $\varepsilon>0$ is arbitrary, we conclude that (\ref{ad6a}) holds.
\end{proof}

\section{Setting of the problem. The main results.}

Let us look at the problem (\ref{1}), (\ref{ini}) in the framework of distributions on the space $\R_+\times\B_n$.
Since $\R^n$ can be identified with a subgroup of $\B_n$ we can define partial derivatives $f_{x_j}(y)$, $j=1,\ldots,n$, of a function $f(y)$ on $\B_n$ as partial derivatives $h_{x_j}(0)$ of the function $h(x)=f(x+y)$, $x\in\R^n$. This allows to introduce the spaces $C^k(\B_n)$, $k=0,1,\ldots$ and the space $C^\infty(\B_n)$. As usual, distributions on $\B_n$ are the linear continuous functionals on $C^\infty(\B_n)$. In the same way as in the classic situation we can introduce generalized derivatives $f_{x_j}$ of a function $f(y)\in L^1(\B_n,m)$. They are functionals
$$
\<f_{x_j},g\>=-\<f,g_{x_j}\>=-\int_{\B_n}f(y)g_{x_j}(y)dm(y)
$$
acting on the space of test functions $g(y)\in C^1(\B_n)$.

Hence, we may study the Cauchy problem:
\begin{equation}\label{adeq}
v_t+\div_x\varphi(v)=v_t+\sum_{j=1}^n (\varphi_j(v))_{x_j}=0,
\end{equation}
$v=v(t,y)$, $t>0$, $y\in\B_n$, with initial data
\begin{equation}\label{adini}
v(0,y)=v_0(y)\in L^\infty(\B_n,m).
\end{equation}
We denote by $C^1_0(\R_+\times\B_n)$ the space of compactly supported test functions $f=f(t,x)$, which have continuous partial derivatives $f_t$, $f_{x_j}$, $j=1,\ldots,n$, defined in the usual way:
$$
f_t(t,y)=\lim_{\delta\to 0} \frac{f(t+\delta,y)-f(t,y)}{\delta}, \ f_{x_j}(t,y)=\lim_{\delta\to 0} \frac{f(t,y+\delta e_j)-f(t,y)}{\delta},
$$
where $t>0$, $y\in\B_n$, and $e_j$, $j=1,\ldots,n$, being the canonical basis in $\R^n$. It is clear that these derivatives coincides with classic derivatives of the function $h(t,x)=f(t,y+x)$ at the points $(t,0)$. Actually, the space $C^1_0(\R_+\times\B_n)$ consists of functions $f(t,y)=\hat h(t,y)$ being the Gelfand transforms (as functions of spacial variables) of the functions $h(t,x)\in C^1(\Pi)$, which together with their partial derivatives are Bohr almost periodic functions (with respect to the spacial variables), and have supports inside layers $t\in [a,b]$, $b>a>0$ (where $a,b$ depend on $h$). Besides, as is easy to verify, $f_t=\widehat{h_t}$, $f_{x_j}=\widehat{h_{x_j}}$, $j=1,\ldots,n$.

\begin{definition}\label{def2}
A function $v=v(t,y)\in L^\infty(\R_+\times\B_n)=L^\infty(\R_+\times\B_n,dt\times m)$ is called an e.s. of problem (\ref{adeq}), (\ref{adini}) if for all $k\in\R$
$$
|v-k|_t+\div_x [\sign(v-k)(\varphi(v)-\varphi(k))]\le 0
$$
in the sense of distributions on $\R_+\times\B_n$, that is,
\begin{equation}\label{ad7}
\int_{\R_+\times\B_n} [|v-k|f_t+\sign(v-k)(\varphi(v)-\varphi(k))\cdot\nabla_x^y f]dtdm(y)\ge 0
\end{equation}
for every nonnegative test function $f=f(t,y)\in C^1_0(\R_+\times\B_n)$, and
$$
\esslim_{t\to 0} v(t,\cdot)=v_0 \ \mbox{ in } L^1(\B_n,m).
$$
\end{definition}
In (\ref{ad7}) we denote by $\nabla_x^y f$ the vector with coordinates $f_{x_j}(t,y)$, $j=1,\ldots,n$.
\begin{theorem}\label{adth1}
There exists a unique e.s. of (\ref{adeq}), (\ref{adini}). Moreover,
$v(t,y)=\hat u(t,y)$, where $u(t,x)\in C([0,+\infty),\B^1(\R^n))\cap L^\infty(\Pi)$ be an almost periodic e.s. to the problem (\ref{1}), (\ref{ini}) with the initial functions $u_0=u_0(x)$ such that $v_0=\widehat{u_0}$.
\end{theorem}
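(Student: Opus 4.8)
The plan is to establish existence and uniqueness separately; the ``moreover'' clause then falls out, because the e.s. produced in the existence step is already of the form $\hat u$. For existence I would start from a Kruzhkov e.s. $u(t,x)$ of (\ref{1}), (\ref{ini}) on $\R^n$ whose initial datum $u_0$ is the (essentially unique) $\B^1(\R^n)\cap L^\infty(\R^n)$ representative of $v_0$ supplied by Lemma~\ref{lemad1}; such a $u$ exists by the references cited in the Introduction, and by Theorem~\ref{th1} it may be taken in $C([0,+\infty),\B^1(\R^n))\cap L^\infty(\Pi)$ with $M(u(t,\cdot))\subset M(u_0)$ for all $t$. Put $v=\hat u$. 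Joint measurability and boundedness of $v$ follow from the continuity of $t\mapsto\hat u(t,\cdot)$ into $L^1(\B_n,m)$ and from Lemma~\ref{lemad1}, while the initial condition transfers because the Gelfand transform is an $L^1$-isometry: $u(t,\cdot)\to u_0$ in $\B^1(\R^n)$ forces $v(t,\cdot)\to v_0$ in $L^1(\B_n,m)$.

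The real content of the existence step is to deduce the entropy inequality (\ref{ad7}) for $v$ from its compactly supported counterpart for $u$. Given a nonnegative test function $f=\hat h\in C^1_0(\R_+\times\B_n)$, with $h(t,x)$ almost periodic in $x$ and compactly supported in $t$, I would insert $f_R(t,x)=h(t,x)g(x/R)$, where $g\in C_0^\infty(\R^n)$ is nonnegative with $\int_{\R^n}g=1$, into the entropy inequality for $u$, divide by $R^n$, and let $R\to+\infty$. By Lemma~\ref{lemad2}, applied for each fixed $t$ and then integrated in $t$ by dominated convergence, the terms carrying $g(\cdot/R)$ converge to the corresponding mean values over $\R^n$, whereas the term generated by $\nabla_x[g(\cdot/R)]=R^{-1}(\nabla g)(\cdot/R)$ carries a spare factor $R^{-1}$ and drops out. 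Combining this with (\ref{ad0}) and the intertwining identities $\widehat{|u-k|}=|v-k|$, $\widehat{\sign(u-k)(\varphi(u)-\varphi(k))}=\sign(v-k)(\varphi(v)-\varphi(k))$, $\widehat{h_t}=f_t$, $\widehat{h_{x_j}}=f_{x_j}$ (all instances of $\widehat{G(u)}=G(\hat u)$, as in the proof of Lemma~\ref{lemad1}) turns the resulting mean-value inequality into precisely (\ref{ad7}).

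For uniqueness I would run Kruzhkov's doubling of variables directly on $\B_n$, comparing an arbitrary e.s. $v=v(t,y)$ with the constructed solution $\hat u=\hat u(s,z)$ sharing the datum $v_0$. Writing (\ref{ad7}) for $v$ with constant $k=\hat u(s,z)$ and for $\hat u$ with constant $k=v(t,y)$, I test both against $f(t,y,s,z)=\psi(t,y)\,\delta_\nu(t-s)\,\widehat{\Phi_r}(y-z)$, where $\psi\ge 0$ is a test function on $\R_+\times\B_n$, $\delta_\nu$ is the time approximate unity, and $\widehat{\Phi_r}$ is the Bochner--Fej\'er kernel adapted to the $\Q$-span of $Sp(v_0)$, and add the two inequalities. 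Since $\delta_\nu$ depends only on $t-s$ and $\widehat{\Phi_r}$ only on $y-z$, the combination $f_t+f_s$ sheds its $\delta_\nu'$-term and the two spatial gradients shed their $\nabla\widehat{\Phi_r}$-terms, leaving only $\psi_t\,\delta_\nu\widehat{\Phi_r}$ and $(\nabla_x^y\psi)\,\delta_\nu\widehat{\Phi_r}$. It is essential to keep $\hat u$, not $v$, in the mollified slot $(s,z)$: the diagonal limit regularizes that argument, and only $\hat u$ is known (via Theorem~\ref{th1}) to have spatial spectrum inside $M(u_0)\subset\Q$-span$(Sp(v_0))$, which is what licenses the single, $t$-independent kernel.

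The decisive step is the passage to the diagonal. Letting $\nu\to+\infty$ I would use the Lebesgue-point relations (\ref{Leb1}), (\ref{Leb2}) to send $\hat u(s,z)$ to $\hat u(t,z)$, and letting $r\to+\infty$ I would apply Corollary~\ref{corad1} to $\hat u(t,\cdot)$ to replace $\hat u(t,z)$ by $\hat u(t,y)$ in the absolute-value term, and Corollary~\ref{corad2}, with $\omega$ a modulus of continuity of $\varphi$ on the common range of the solutions, to show that $|\varphi(\hat u(t,z))-\varphi(\hat u(t,y))|$ averages to zero against $\widehat{\Phi_r}(y-z)$. This is exactly where merely continuous flux is accommodated, and it is the principal obstacle: $\widehat{\Phi_r}$ is not a genuine mollifier, and only the averaging properties of the Bochner--Fej\'er kernels recorded in Corollaries~\ref{corad1}, \ref{corad2} make the diagonal limit go through. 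What survives is the Kruzhkov inequality for the difference,
$$\int_{\R_+\times\B_n}\bigl[\,|v-\hat u|\,\psi_t+\sign(v-\hat u)(\varphi(v)-\varphi(\hat u))\cdot\nabla_x^y\psi\,\bigr]\,dt\,dm(y)\ge 0,$$
for every nonnegative $\psi\in C^1_0(\R_+\times\B_n)$. Finally I would exploit the compactness of $\B_n$ --- the very feature unavailable on $\R^n$, and the reason uniqueness can hold here --- by taking $\psi=\psi(t)$ independent of the spatial variable, so that $\nabla_x^y\psi=0$ kills the flux term and leaves $\int_{\R_+}\bigl(\int_{\B_n}|v-\hat u|\,dm(y)\bigr)\psi'(t)\,dt\ge 0$ for all nonnegative $\psi\in C_0^1(\R_+)$. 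Hence $t\mapsto\int_{\B_n}|(v-\hat u)(t,y)|\,dm(y)$ is nonincreasing; since both solutions attain $v_0$ as $t\to 0$ it starts at $0$, so it vanishes for all $t$, giving $v=\hat u$ and thereby both uniqueness and the ``moreover'' assertion.
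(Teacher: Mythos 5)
Your proposal is correct and follows essentially the same route as the paper's proof: existence via the rescaled test functions $h(t,x)g(x/R)$, Lemma~\ref{lemad2} and the identity (\ref{ad0}), and uniqueness via Kruzhkov doubling with the kernel $\delta_\nu\widehat{\Phi_r}$ adapted to the $\Q$-span of $Sp(v_0)$, the Lebesgue-point relations (\ref{Leb1}), (\ref{Leb2}), Corollaries~\ref{corad1}, \ref{corad2}, and finally spatially constant test functions yielding the monotonicity of $\int_{\B_n}|v-\hat u|\,dm$. Your remark that the constructed solution $\hat u$ must occupy the mollified slot, since only its spatial spectrum is controlled by Theorem~\ref{th1}, is exactly the point the paper's argument exploits.
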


\begin{proof}
By Lemma~\ref{lemad1} there exists a unique $u_0=u_0(x)\in\B^1(\R^n)\cap L^\infty(\R^n)$ such that $v_0=\widehat{u_0}$.
Let $u(t,x)\in C([0,+\infty),\B^1(\R^n))\cap L^\infty(\Pi)$ be an almost periodic e.s. to the problem (\ref{1}), (\ref{ini}) with the initial functions $u_0(x)$. We choose a function $g(y)\in C_0^1(\R^n)$, $g(y)\ge 0$,
$g\not\equiv 0$, and apply relation (\ref{2}) to the test function $\displaystyle f_R=\frac{1}{CR^n}h(t,x)g(x/R)$, where
$\displaystyle C=\int_{\R^n} g(y)dy$, $R>0$, and $h(t,x)\in C^1(\Pi)$ is a nonnegative function, which is almost periodic (with respect to the spacial variables) together with its partial derivatives, and is supported in some layer $0<a<t<b$. We obtain that for each $k\in\R$
\begin{eqnarray}\label{ad8}
\frac{1}{CR^n}\int_{\Pi}[|u-k|h_t+\sign(u-k)(\varphi(u)-\varphi(k))\cdot\nabla_x h]g(x/R)dtdx+\nonumber\\
\frac{1}{CR^{n+1}}\int_{\Pi}\sign(u-k)(\varphi(u)-\varphi(k))\cdot\nabla_y g(x/R)h dtdx\ge 0.
\end{eqnarray}
Passing in (\ref{ad8}) to the limit as $R\to+\infty$ and taking into account the statement of Lemma~\ref{lemad2}, we obtain the relation
$$
\int_{\R_+}\left(\dashint_{\R^n} [|u-k|h_t+\sign(u-k)(\varphi(u)-\varphi(k))\cdot\nabla_x h]dx\right)dt\ge 0.
$$
In view of (\ref{ad0}) we can rewrite this relation as follows
\begin{equation}\label{ad9}
\int_{\R_+\times\B_n}[|v-k|\hat h_t+\sign(v-k)(\varphi(v)-\varphi(k))\cdot\nabla_x^y\hat h]dtdm(y)\ge 0,
\end{equation}
where $v=\hat u(t,y)$. Since $\hat h(t,y)$ is arbitrary nonnegative function from ${C^1_0(\R_+\times\B_n)}$, we claim that
entropy relation (\ref{ad7}) holds. Besides, again by (\ref{ad0})
$$
\int_{\B_n}|v(t,y)-v_0(y)|dm(y)=\dashint_{\R^n}|u(t,x)-u_0(x)|dx=N_1(u(t,\cdot)-u_0)\mathop{\to}_{t\to 0} 0
$$
and initial condition (\ref{adini}) in the sense of Definition~\ref{def2} is also satisfied.

To complete the proof of Theorem~\ref{adth1}, it only remains to establish the uniqueness. Suppose that $v_1=v_1(t,y)\in L^\infty(\R_+\times\B_n)$ be an e.s. of (\ref{adeq}), (\ref{adini}). We are going to demonstrate that
$v_1=v=\hat u(t,y)$. Recall that $u=u(t,x)\in C([0,+\infty),\B^1(\R^n))\cap L^\infty(\Pi)$ be an almost periodic e.s. to the problem (\ref{1}), (\ref{ini}) with the initial functions $u_0(x)$. Denote by $M_0=M(u_0)$ the minimal additive subgroup of $\R^n$ containing $Sp(u_0)$. By Theorem~\ref{th1} $Sp(u(t,\cdot))\subset M_0$ for all $t>0$. Let $\Phi_r(x)$, $r\in\N$ be the sequence of Bochner-Fej\'er kernels corresponding to a basis of $\Q$-linear subspace generated by $M_0$ (~or, the same, $Sp(u_0)$~). Let $\rho(s)\in C_0^\infty(\R)$ be a function such that $\supp\rho(s)\subset [0,1]$, $\rho(s)\ge 0$,
${\displaystyle\int_{-\infty}^{+\infty}\rho(s)ds=1}$. We introduce the approximate unity $\delta_\nu(s)=\nu\rho(\nu s)$, $\nu\in\N$, so that the sequence $\delta_\nu(s)$ converges as $\nu\to\infty$ to the Dirac $\delta$-measure weakly in $\D'(\R)$. We are going to adapt the Kruzhkov method of doubling variables to the pair of e.s. $v_1,v$. For that we apply (\ref{ad7}) for e.s. $v_1$ and
$k=v(s,z)$, $(s,z)\in\R_+\times\B_n$, to the test function $f(t,y)\delta_\nu(s-t)\widehat{\Phi_r}(y-z)$, where $f(t,y)\in C_0^1(\R_+\times\B_n)$, $f(t,y)\ge 0$. Integrating the result over $(s,z)$, we arrive at
\begin{eqnarray}\label{ad9a}
\int_{\R_+\times\B_n}\int_{\R_+\times\B_n}[|v_1(t,y)-v(s,z)|(f(t,y)\delta_\nu(s-t))_t\widehat{\Phi_r}(y-z)+\nonumber\\
\sign(v_1(t,y)-v(s,z))(\varphi(v_1(t,y))-\varphi(v(s,z)))\cdot\nabla_x^y(f(t,y)\widehat{\Phi_r}(y-z))\nonumber\\ \times\delta_\nu(s-t)]dtdm(y)dsdm(z)\ge 0.
\end{eqnarray}
In the same way, changing the places of variables $(t,y)$ and $(s,z)$ and e.s. $v_1(t,y)$ and $v(s,z)$, we derive from (\ref{ad7}) with $v=v(s,z)$, $k=v_1(t,y)$ that
\begin{eqnarray}\label{ad10}
\int_{\R_+\times\B_n}\int_{\R_+\times\B_n}[|v_1(t,y)-v(s,z)|(\delta_\nu(s-t))_sf(t,y)\widehat{\Phi_r}(y-z)+\nonumber\\
\sign(v_1(t,y)-v(s,z))(\varphi(v_1(t,y))-\varphi(v(s,z)))\cdot\nabla_x^z(\widehat{\Phi_r}(y-z))\nonumber\\ \times f(t,y)\delta_\nu(s-t)]dtdm(y)dsdm(z)\ge 0.
\end{eqnarray}
Putting (\ref{ad9a}) and (\ref{ad10}) together and taking into account the obvious identities
\begin{eqnarray*}
(f(t,y)\delta_\nu(s-t))_t+(\delta_\nu(s-t))_sf(t,y)=f_t(t,y)\delta_\nu(s-t), \\ \nabla_x^y(f(t,y)\widehat{\Phi_r}(y-z))+\nabla_x^z(\widehat{\Phi_r}(y-z))f(t,y)=(\nabla_x^y f(t,y))\widehat{\Phi_r}(y-z),
\end{eqnarray*}
we find that
\begin{eqnarray}\label{ad11}
\int_{\R_+\times\B_n}\int_{\R_+\times\B_n}[|v_1(t,y)-v(s,z)|f_t(t,y)+\nonumber\\
\sign(v_1(t,y)-v(s,z))(\varphi(v_1(t,y))-\varphi(v(s,z)))\cdot \nabla_x^y f(t,y)]\nonumber\\ \times\delta_\nu(s-t)\widehat{\Phi_r}(y-z)dtdm(y)dsdm(z)\ge 0.
\end{eqnarray}
First, we pass to the limit in (\ref{ad11}) as $\nu\to\infty$. We have the following inequalities
\begin{eqnarray}\label{adineq}
||v_1(t,y)-v(s,z)|-|v_1(t,y)-v(t,z)||\le |v(s,z)-v(t,z)|, \nonumber\\
|\sign(v_1(t,y)-v(s,z))(\varphi(v_1(t,y))-\varphi(v(s,z)))- \nonumber\\ \sign(v_1(t,y)-v(t,z))(\varphi(v_1(t,y))-\varphi(v(t,z)))| \le 2\omega(|v(s,z)-v(t,z)|),
\end{eqnarray}
where $\displaystyle \omega(r)=\max_{u,v\in [-M,M], |u-v|\le r} |\varphi(u)-\varphi(v)|$ is the continuity modulus of the vector $\varphi(u)$ on the segment $[-M,M]$, $M=\|v\|_\infty$. Let $\chi(t)\in C_0(\R_+)$ be a nonnegative function such that $\chi(t)=1$ if $(t,y)\in\supp f$.  By relations (\ref{Leb1}), (\ref{Leb2}), for a.e. $z\in\B_n$
\begin{eqnarray*}
\lim_{\nu\to\infty}\int_{\R_+\times\R_+} |v(s,z)-v(t,z)|\chi(t)\delta_\nu(s-t)dsdt=\\ \lim_{\nu\to\infty}\int_{\R_+\times\R_+} \omega(|v(s,z)-v(t,z)|)\chi(t)\delta_\nu(s-t)dsdt=0.
\end{eqnarray*}
This, together with (\ref{adineq}), implies that
\begin{eqnarray}\label{ad12}
\lim_{\nu\to\infty}\int_{\R_+\times\B_n}\int_{\R_+\times\B_n}[|v_1(t,y)-v(s,z)|f_t(t,y)+\nonumber\\
\sign(v_1(t,y)-v(s,z))(\varphi(v_1(t,y))-\varphi(v(s,z)))\cdot \nabla_x^y f(t,y)]\nonumber\\ \times\delta_\nu(s-t)\widehat{\Phi_r}(y-z)dtdm(y)dsdm(z)=\nonumber\\
\lim_{\nu\to\infty}\int_{\R_+\times\B_n}\int_{\R_+\times\B_n}[|v_1(t,y)-v(t,z)|f_t(t,y)+\nonumber\\
\sign(v_1(t,y)-v(t,z))(\varphi(v_1(t,y))-\varphi(v(t,z)))\cdot \nabla_x^y f(t,y)]\nonumber\\ \times\delta_\nu(s-t)\widehat{\Phi_r}(y-z)dtdm(y)dsdm(z)=\nonumber\\
\int_{\R_+\times\B_n\times\B_n}[|v_1(t,y)-v(t,z)|f_t(t,y)+\nonumber\\
\sign(v_1(t,y)-v(t,z))(\varphi(v_1(t,y))-\varphi(v(t,z)))\cdot \nabla_x^y f(t,y)]\nonumber\\ \times\widehat{\Phi_r}(y-z)dtdm(y)dm(z).
\end{eqnarray}
In view of (\ref{ad12}) it follows from (\ref{ad11}) that
\begin{eqnarray}\label{ad13}
\int_{\R_+\times\B_n\times\B_n}[|v_1(t,y)-v(t,z)|f_t(t,y)+\nonumber\\
\sign(v_1(t,y)-v(t,z))(\varphi(v_1(t,y))-\varphi(v(t,z)))\cdot \nabla_x^y f(t,y)]\nonumber\\ \times\widehat{\Phi_r}(y-z)dtdm(y)dm(z)\ge 0.
\end{eqnarray}
Passing in (\ref{ad13}) to the limit as $r\to\infty$, and taking into account the inequalities
\begin{eqnarray*}
||v_1(t,y)-v(t,z)|-|v_1(t,y)-v(t,y)||\le |v(t,y)-v(t,z)|, \\
|\sign(v_1(t,y)-v(t,z))(\varphi(v_1(t,y))-\varphi(v(t,z)))- \\ \sign(v_1(t,y)-v(t,y))(\varphi(v_1(t,y))-\varphi(v(t,y)))|\le 2\omega(|v(t,y)-v(t,z)|)
\end{eqnarray*}
together with the statements of Corollaries~\ref{corad1},~\ref{corad2} (notice that $Sp(v(t,\cdot))\subset M_0\subset H$), we arrive at
\begin{eqnarray}\label{ad14}
\int_{\R_+\times\B_n}[|v_1(t,y)-v(t,y)|f_t(t,y)+\nonumber\\
\sign(v_1(t,y)-v(t,y))(\varphi(v_1(t,y))-\varphi(v(t,y)))\cdot \nabla_x^y f(t,y)]dtdm(y)\ge 0.
\end{eqnarray}
Taking in (\ref{ad14}) the test functions $f=f(t)\in C_0^1(\R_+)$, $f(t)\ge 0$, we obtain that
$$
\int_0^{+\infty}\left(\int_{\B_n}|v_1(t,y)-v(t,y)|dm(y)\right)f'(t)dt\ge 0.
$$
This relation means that $I'(t)\le 0$ in $\D'(\R_+)$, where we denote
$$
I(t)=\int_{\B_n}|v_1(t,y)-v(t,y)|dm(y).
$$
Therefore, for a.e. $t>0$
\begin{equation}\label{ad15}
I(t)\le I(0)\doteq\esslim_{t\to 0+} I(t)=0.
\end{equation}
The latter equality follows from the estimate
\begin{eqnarray*}
I(t)=\int_{\B_n}|v_1(t,y)-v(t,y)|dm(y)\le\\ \int_{\B_n}|v_1(t,y)-v_0(y)|dm(y)+\int_{\B_n}|v(t,y)-v_0(y)|dm(y)
\end{eqnarray*}
and the initial requirement (\ref{adini}) for the e.s. $v,v_1$ (~in the sense of Definition~\ref{def2}~).
It now follows from (\ref{ad15}) that $I(t)=0$ for a.e. $t>0$, which implies that $v_1=v$ a.e. in $\R_+\times\B_n$
and completes the proof.
\end{proof}

\begin{remark}\label{adrem}
By Theorems~\ref{th1},~\ref{adth1}, the e.s. $v(t,y)$ of (\ref{adeq}), (\ref{adini}) belongs to the space $C([0,+\infty),L^1(\B_n,m))$, after possible correction on a set of null measure. Moreover, $Sp(v(t,\cdot))\subset M_0$ for all $t>0$, where $M_0$ is the additive subgroup of $\R^n$ generated by $Sp(v_0)$.
Besides, if $v_1(t,y),v_2(t,y)$ are e.s. of (\ref{adeq}), (\ref{adini}) with initial data $v_{01}(y)$, $v_{02}(y)$, respectively, then, as follows from the relation like (\ref{ad15}), for all $t>0$
$$
\int_{\B_n}|v_1(t,y)-v_2(t,y)|dm(y)\le \int_{\B_n}|v_{01}(y)-v_{02}(y)|dm(y)
$$
(~$L^1$-contraction property~). Obviously, this property implies the uniqueness of e.s.
\end{remark}

It readily follows from the results of Theorem~\ref{adth1} and Theorem~\ref{th2} the decay property for e.s. of
(\ref{adeq}), (\ref{adini}).

\begin{theorem}\label{adth2}
Let $M_0$ be an additive subgroup of $\R^n$. Then non-degeneracy condition (\ref{ND}) is necessary and sufficient for the decay property
$$
\esslim_{t\to+\infty}\int_{\B_n}|v(t,y)-C|dm(y)=0, \quad C=\int_{\B_n} v_0(y)dm(y)
$$
of every e.s. to (\ref{adeq}), (\ref{adini}) such that $Sp(v_0)\subset M_0$.
\end{theorem}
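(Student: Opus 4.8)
The plan is to transfer the statement, via the Gelfand isomorphism of Theorem~\ref{adth1}, to the corresponding almost periodic Cauchy problem on $\R^n$ and then invoke Theorem~\ref{th2}. For a given $v_0\in L^\infty(\B_n,m)$ I write $v_0=\widehat{u_0}$ with $u_0\in\B^1(\R^n)\cap L^\infty(\R^n)$ (Lemma~\ref{lemad1}), and let $v=\hat u$ be the e.s.\ of (\ref{adeq}), (\ref{adini}), where $u$ is the almost periodic e.s.\ of (\ref{1}), (\ref{ini}) furnished by Theorem~\ref{adth1}. Two elementary facts do all the translating. First, applying (\ref{ad0}) to $|u(t,\cdot)-C|\in\B^1(\R^n)$ and to $u_0$ gives, for every $C\in\R$,
\begin{equation*}
\int_{\B_n}|v(t,y)-C|\,dm(y)=\dashint_{\R^n}|u(t,x)-C|\,dx,\qquad \int_{\B_n}v_0\,dm=\dashint_{\R^n}u_0\,dx.
\end{equation*}
Second, testing the Gelfand transform against the characters $\widehat{e_\lambda}$ of $\B_n$, where $e_\lambda(x)=e^{2\pi i\lambda\cdot x}$, and using (\ref{ad0}) shows that the Bohr--Fourier coefficient of $v_0$ at $\lambda$ equals that of $u_0$, so $Sp(v_0)=Sp(u_0)$; hence $Sp(v_0)\subset M_0$ is equivalent to $Sp(u_0)\subset M_0$, which (as $M_0$ is a subgroup) forces $M(u_0)\subset M_0$. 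Since $u\in C([0,+\infty),\B^1(\R^n))$, the map $t\mapsto\int_{\B_n}|v(t,\cdot)-C|\,dm$ is continuous, so the $\esslim$ in the statement is an honest limit and matches the limit in (\ref{dec}).

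For the sufficiency of (\ref{ND}) I would proceed as follows. Assume (\ref{ND}) holds for the group $M_0$ and fix any $v_0$ with $Sp(v_0)\subset M_0$. By the remarks above $M(u_0)\subset M_0$, so every $\xi\in M(u_0)\setminus\{0\}$ lies in $M_0\setminus\{0\}$; thus (\ref{ND}) holds a fortiori for $M(u_0)$, and Theorem~\ref{th2} yields
\begin{equation*}
\lim_{t\to+\infty}\dashint_{\R^n}|u(t,x)-C|\,dx=0,\qquad C=\dashint_{\R^n}u_0\,dx.
\end{equation*}
By the two identities of the first paragraph this is precisely $\esslim_{t\to+\infty}\int_{\B_n}|v(t,y)-C|\,dm(y)=0$ with $C=\int_{\B_n}v_0\,dm$, which is the desired decay property.

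For the necessity I would argue by contraposition. Suppose (\ref{ND}) fails for $M_0$, i.e.\ there is $\xi_0\in M_0$, $\xi_0\neq0$, with $s\mapsto\xi_0\cdot\varphi(s)$ affine, say equal to $cs+d$, on a nondegenerate interval $[a,b]$. The key point is that the counterexample from the exactness part of Theorem~\ref{th2} can be taken adapted to $\xi_0$: choosing a nonconstant $1$-periodic profile $U_0\in C^1(\R)$ with values in $[a,b]$ and setting $u_0(x)=U_0(\xi_0\cdot x)$, a direct computation gives $\div_x\varphi(u_0)=(\xi_0\cdot\varphi)'(U_0)\,U_0'=c\,U_0'$ (only $\xi_0\cdot\varphi$ enters, and it is affine on the range of $U_0$), so $u(t,x)=U_0(\xi_0\cdot x-ct)$ is a smooth, hence entropy, solution of (\ref{1}), (\ref{ini}). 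Its mean value is constant, $\dashint_{\R^n}u(t,\cdot)\,dx=\int_0^1U_0=C$, while $\dashint_{\R^n}|u(t,x)-C|\,dx=\int_0^1|U_0(s)-C|\,ds>0$ is independent of $t$, so (\ref{dec}) fails. Because $U_0$ is $1$-periodic, $Sp(u_0)\subset\Z\xi_0\subset M_0$, whence $Sp(v_0)\subset M_0$ for $v_0=\widehat{u_0}$, and by the identities above the e.s.\ $v=\hat u$ of (\ref{adeq}), (\ref{adini}) violates the decay property as well.

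The only genuinely non-bookkeeping step is this necessity construction: one must exhibit a non-decaying e.s.\ whose spectrum is confined to the prescribed group $M_0$, and that is exactly why the profile is taken $1$-periodic along the witnessing direction $\xi_0\in M_0$, so that $Sp(u_0)\subset\Z\xi_0\subset M_0$. Everything else is a transcription, through (\ref{ad0}) and the character correspondence $Sp(v_0)=Sp(u_0)$, of Theorem~\ref{th2} from $\R^n$ to $\B_n$.
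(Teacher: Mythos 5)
Your proposal is correct and takes essentially the same route as the paper, whose proof of Theorem~\ref{adth2} is precisely the one-line reduction you carry out: transfer the problem through the Gelfand isomorphism of Theorem~\ref{adth1} (using (\ref{ad0}) and the equality of Bohr--Fourier coefficients, so that $Sp(v_0)\subset M_0$ iff $M(u_0)\subset M_0$) and then invoke both the decay assertion and the exactness clause of Theorem~\ref{th2}. Your explicit traveling-wave counterexample $u(t,x)=U_0(\xi_0\cdot x-ct)$ for the necessity part is simply an unfolding of that exactness clause (established in \cite{prep}), and your verification of it --- in particular the observation that only the affine function $\xi_0\cdot\varphi$ enters the entropy computation, so mere continuity of $\varphi$ causes no trouble --- is sound.
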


{\bf Acknowledgement.}
This research was carried out with the financial support of the Russian Foundation for Basic Research (grant no. 12-01-00230) and the Ministry of Education and Science of Russian Federation (in the framework of state task).


\begin{thebibliography}{99}
\parskip=-2pt
\bibitem{Bes}
\textsc{Besicovitch~A.\,S.}: \textit{Almost Periodic Functions}. Cambridge University Press, 1932
\bibitem{Frid}
\textsc{Frid~H.}: Decay of Almost Periodic Solutions of Conservation Laws. \textit{ Arch. Rational Mech. Anal.} \textbf{161}, 43-64 (2002)
\bibitem{Kr}
\textsc{Kruzhkov~S.\,N.}: First order quasilinear equations in several independent variables.
\textit{Mat. Sb.} \textbf{81}, 228--255 (1970); Engl. transl. in \textit{Math. USSR Sb.} \textbf{10}, 217--243  (1970)
\bibitem{KrPa1}
\textsc{Kruzhkov~S.\,N., Panov~E.\,Yu.}: First-order conservative quasilinear laws with an infinite
domain of dependence on the initial data. \textit{Dokl. Akad. Nauk SSSR} \textbf{314}, 79--84 (1990);
Engl. transl. in \textit{Soviet Math. Dokl.} \textbf{42}, 316--321 (1991)
\bibitem{KrPa2}
\textsc{Kruzhkov~S.\,N., Panov~E.\,Yu.}: Osgood's type conditions for uniqueness of entropy solutions
to Cauchy problem for quasilinear conservation laws of the first order. \textit{Ann. Univ. Ferrara
Sez. VII (N.S.)} \textbf{40}, 31--54 (1994)
\bibitem{Lev}
\textsc{Levitan~B.\,M.}: \textit{Almost periodic functions}. Gostekhizdat: Moscow, 1953 (Russian)
\bibitem{PaMV}
\textsc{Panov~E.\,Yu.}: On measure-valued solutions of the Cauchy problem for a first-order quasilinear equation.
\textit{Izvestiya RAN: Ser. Mat.} \textbf{60}:2, 107--148 (1996); Engl. transl. in \textit{Izvestiya: Mathematics} \textbf{60}:2, 335--377 (1996)
\bibitem{PaIzv}
\textsc{Panov~E.\,Yu.}: On generalized entropy solutions of the Cauchy problem for a first order quasilinear equation in the class of locally summable functions. \textit{Izvestiya RAN: Ser. Mat.} \textbf{66}:6, 91--136 (2002); Engl. transl. in \textit{Izvestiya: Mathematics} \textbf{66}:6, 1171--1218 (2002)
\bibitem{prep}
\textsc{Panov~E.\,Yu.}: On the Cauchy problem for scalar conservation laws in the class of Besicovitch almost periodic functions: global well-posedness and decay property. Preprint, arXiv:1406.4838v2, 19 Jun 2014

\end{thebibliography}
\end{document}